\documentclass[12pt]{article}
\usepackage[utf8]{inputenc}
\usepackage{amssymb,amsmath,amsfonts,amsthm,amscd,latexsym,indentfirst,verbatim,xcolor}
\usepackage[T1]{fontenc}
\usepackage{geometry}
\def\rb{\textrm{rb}}
\def\RB{\textrm{RB}}
\def\dbl{\lbrace\kern-3pt\lbrace}
\def\dbr{\rbrace\kern-3pt\rbrace}

\def\diag{\textrm{diag}}
\def\Imm{\textrm{Im}}

\def\Aut{\textrm{Aut}}

\def\Span{\textrm{Span}}
\def\id{\operatorname{id}}
\def\ad{\operatorname{ad}}

\def\Imm{\mathrm{Im}\,}
\def\charr{\mathrm{char}\,}

\theoremstyle{plain}
\newtheorem{theorem}{Theorem}[section]
\newtheorem{lemma}[theorem]{Lemma}

\newtheorem*{conjecture*}{Conjecture}

\theoremstyle{definition}

\begin{document}
\sloppy
\hfill{16W99, 16S50 (MSC2020)}
\begin{center}
{\Large Extremal Rota---Baxter operators on matrix algebras}
\smallskip

Vsevolod Gubarev
\end{center}

\begin{abstract}
We classify all Rota---Baxter operators of weight~$\lambda$ on $M_n(F)$ over a~field~$F$ of characteristic zero
whose minimal polynomial has maximal degree and show that in both cases $\lambda = 0$ and $\lambda \neq 0$
such an operator is conjugate to exactly one operator.
\end{abstract}

\section{Introduction}

Let $A$ be an algebra over a field~$F$. A linear map $R\colon A\to A$ is a
Rota---Baxter operator of weight $\lambda\in F$ if
\begin{equation}\label{RB}
R(x)R(y) = R(R(x)y+xR(y)+\lambda xy)
\end{equation}
holds for all $x,y \in A$.

The notion of Rota---Baxter operators was introduced by G. Baxter in 1960~\cite{Baxter} in connection with a~certain problem in analysis.
Further, G.-C. Rota~\cite{Rota}, his coauthors and students continued the study of such operators.

Nowadays, Rota---Baxter operators are widely known because of their fruitful connections, including
many versions of Yang---Baxter equation, pre- and postalgebras, skew braces, double Lie algebras, decompositions of algebras, etc., see~\cite{Guo}.

Rota---Baxter operators on the matrix algebra play a special role.
On $M_n(\mathbb{C})$, solutions of the associative Yang---Baxter equation~\cite{Aguiar,Polishchuk,Zhelyabin} are in one-to-one correspondence with Rota---Baxter operators of weight zero~\cite[Theorem\,3.4]{GubarevUnital}.
Moreover, solutions of the weighted associative Yang---Baxter equation~\cite{FardThesis,OgievetskyPopov} on $M_n(\mathbb{C})$ are in one-to-one correspondence with Rota---Baxter operators of nonzero weight~\cite[Theorem 4.12]{AYBE-ext}.

Also, skew-symmetric Rota---Baxter operators of weight zero on~$M_n(\mathbb{C})$
are in one-to-one correspondence with finite-dimensional double Lie algebras~\cite{DoubleLie,DoublePoissonFree,Schedler} arising in noncommutative geometry.

Rota---Baxter operators of weight zero on $M_2(F)$ were classified in~\cite{Aguiar,Mat2,Panasenko2} and of nonzero weight in~\cite{BGP,GoncharovGubarevM3}.
Rota---Baxter operators of nonzero weight were described on $M_3(F)$ in the series of works~\cite{GoncharovGubarevM3,Gub2021,Gub2024}, while the zero weight case was done in~\cite{GubarevM3Zero} under the condition that the identity matrix does not lie in the kernel of the operator, see also the classification of all skew-symmetric Rota---Baxter operators of weight zero on $M_3(\mathbb{C})$~\cite{Sokolov} in terms of solutions to the associative Yang---Baxter equation.

However, the general classification problem of all Rota---Baxter operators on $M_n(F)$ looks wild.
Thus, we need to find reasonable and important classes of them.
The current work is devoted to the classification of all extremal Rota---Baxter operators on $M_n(F)$ over a field of characteristic zero, that is, operators whose minimal polynomial has maximal degree, which has to be equal to~$2n-1$.

Let us write down such Rota---Baxter operators on $M_n(F)$.
For the case of weight zero, we have $B_0\colon M_n(F)\to M_n(F)$ defined by
\begin{equation}\label{eq:B0}
B_0(e_{ij}) = \begin{cases}
 \sum\limits_{r\geq0}e_{i+r,j+r+1},  & i\leq j,\\
 -\sum\limits_{r\geq1}e_{i-r,j-r+1}, & i>j.
\end{cases}
\end{equation}

For the case of weight one, we have $B_1\colon M_n(F)\to M_n(F)$ defined by
\begin{equation}\label{eq:B1}
B_1(e_{ij}) = \begin{cases}
 \sum\limits_{r\geq1}e_{i+r,j+r},& i\geq j,\\
 -\sum\limits_{r\geq0}e_{i-r,j-r},& i<j.
\end{cases}
\end{equation}

The Rota---Baxter operators $B_0$ and $B_1$ have already appeared in quite different contexts several times.
They were written down by the author in~\cite[Example 5.18]{GubarevUnital} and~\cite[Example 1]{GubarevSpectrum} exactly as examples of Rota---Baxter operators providing the maximal Rota---Baxter index, it means having the maximal possible degree of their minimal polynomial.

Earlier, the operator $B_0$ was written down in terms of the solution of the associative Yang---Baxter equation
for $n = 2$ by M. Aguiar in 2000~\cite[Example 2.3.3]{Aguiar} and the conjugate operator for $n = 3$ by V.~Sokolov~\cite[Theorem~1]{Sokolov} in 2013.
The operator~$B_1$ appeared in~\cite{BGP} for $n = 2$ and in~\cite{GoncharovGubarevM3} for $n = 3$ within the classifications of Rota---Baxter operators of nonzero weight on the matrix algebras of small order.

In 2010, O.~Ogievetsky and T.~Popov obtained the operators $B_0$ and $B_1$ from the B\'ezout operator
in the context of the associative and weighted associative Yang---Baxter equation~\cite[Eqs.\,(5.43),\,(5.44)]{OgievetskyPopov}.

The prolongations of the operators~$B_0$ and $B_1$ on the infinite matrices served as a~source for the first and, as far as we know, unique examples of simple double Lie algebras and simple double Lie algebras of weight~1, respectively, see~\cite[Theorem 3]{GubarevDouble} and~\cite[Theorems\,3.3,\,4.3]{GubarevLaurent}.
Double Lie algebras of nonzero weight were introduced in~\cite{GoncharovGubarev}.

Now, we formulate the main result of the paper.

\begin{theorem}\label{thm:main}
Let $F$ be a field of characteristic zero and let $R$ be a Rota---Baxter operator of weight~$\lambda$ on $M_n(F)$ whose minimal polynomial has degree~$2n-1$. 

a) If $\lambda = 0$, then $R$ up to conjugation with automorphisms and transpose and up to nonzero scalar multiplication equals $B_0$.

b) If $\lambda \neq 0$, then $R$ up to conjugation with automorphisms and transpose and up to action of~$\phi$ equals $\lambda B_1$.
\end{theorem}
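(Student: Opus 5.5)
Known facts to use (from the author's earlier works \cite{GubarevUnital,GubarevSpectrum}). For a Rota---Baxter operator $R$ of weight $\lambda$ on $M_n(F)$, the minimal polynomial divides $x^{2n-1}$ when $\lambda=0$, and $x^{k}(x+\lambda)^{l}$ with $k+l\le 2n-1$ when $\lambda\ne0$. The degree bound $2n-1$ comes from the following chain argument. For weight $0$, put $A_k=\Imm R^k$. The Rota---Baxter identity gives $\Imm R^{k}\cdot\Imm R^{l}\subseteq \Imm R^{k+l}$, so $A_1\supseteq A_2\supseteq\dots$ is a chain of subalgebras with $A_iA_j\subseteq A_{i+j}$. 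Nilpotency of $R$ plus unitality constraints then force each step to drop the dimension by a controlled amount.

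\emph{Weight zero.} Assume $\deg=2n-1$. The plan is to reconstruct $R$ from the filtration $A_k$.

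First, show that equality in the bound forces $R(1)$ to be a nilpotent matrix with a single Jordan block. The point is that the powers $R(1)^k$ lie in $A_k$ and must be nonzero up to $k=n-1$. Conjugating by an automorphism, we take $R(1)=\sum_i e_{i,i+1}=J$.

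Next, use the identity $R(1)R(x)=R(R(x)+R(1)x)$ and its mirror. Together they express $R$ on the whole algebra through its action on one "diagonal strip". Decompose $M_n$ by the grading $\deg e_{ij}=j-i$. Equality in the degree bound should force each $A_k$ to be spanned by exactly the matrices of degree $\ge k-n+1$ (or their transpose analogues). This is a dimension count: the sum of codimension drops is $n^2$ over $2n-1$ steps, matching the sizes of the diagonals $1,2,\dots,n,\dots,1$.

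Then $R$ becomes a graded map of degree $+1$ on the strips. The Rota---Baxter relation restricted to pairs $e_{ii},e_{jj}$ then determines the coefficients up to a scalar, giving $B_0$ up to scalar. The transpose freedom accounts for the choice between upper and lower orientation.

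\emph{Nonzero weight.} Scale to $\lambda=1$. The map $\phi\colon R\mapsto -R-\id$ swaps the roles of $x$ and $x+1$. We have $M_n=\ker R^{k}\oplus\ker(R+\id)^{l}$ as a direct sum of subalgebras $A_-\oplus A_+$ in the standard decomposition. Extremality, $k+l=2n-1$, should force $A_+$ and $A_-$ to be strictly triangular plus part of the diagonal, i.e.\ opposite Borel-like subalgebras intersecting trivially.

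Using $\phi$ and transpose, we may assume the normal form. Then $R$ is determined by a Rota---Baxter operator of weight $1$ on the diagonal algebra $F^n$ compatible with the chain condition. Here extremality again forces a single "flag", yielding the cumulative sums in $B_1$.

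\emph{Main obstacle.} The hardest step is proving rigidity of the filtration from equality in the degree bound: one must show each step of the chain drops dimension by exactly the diagonal sizes, and that the resulting subalgebras are the standard triangular ones rather than some twisted conjugate. I would first try induction on $n$. Pass to the corner algebra $e M_n e$ with $e$ a suitable idempotent such that $R$ restricts to an extremal operator on $M_{n-1}$. Then apply the induction hypothesis and fix the remaining row and column by direct computation with the Rota---Baxter identity.
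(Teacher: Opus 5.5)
\textbf{Weight zero.} Your plan rests on a filtration property that is false. The inclusion $\Imm R^k\cdot\Imm R^l\subseteq\Imm R^{k+l}$ fails already for $B_0$. Its image is the space $B$ of matrices with zero last row, which contains the idempotent $e_{11}$. Your inclusion would then give $e_{11}=e_{11}^k\in\Imm R^k$ for every $k$, contradicting $R^{2n-1}=0$.

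The rigidity statement you aim for is also false, for two reasons:
\begin{itemize}
\item Your $A_1$ (all matrix units of degree $\ge 2-n$) contains the identity matrix. But every element of $\Imm R$ is singular and $\dim\Imm R\le n^2-n$ (Lemma~\ref{lem:upper-bound}).
\item For any nilpotent operator, the drops $\dim\Imm R^{k-1}-\dim\Imm R^k$ count the Jordan blocks of size $\ge k$. So they are non-increasing and can never be $1,2,\dots,n,\dots,1$. For $B_0$ the first drop is already $\dim\ker R=n$.
\end{itemize}

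Nor is $R$ a priori homogeneous of degree $+1$. Lemma~\ref{lem:zero-known} only gives $R(A_d)\subset\bigoplus_{r>d}A_r$, and removing the lower-order terms is exactly where a conjugation by an invertible polynomial $u(N)$ is needed. Rota--Baxter relations on pairs $e_{ii},e_{jj}$ only constrain $R$ on the diagonal and say nothing about the other $n^2-n$ dimensions.

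The missing mechanism is as follows:
\begin{itemize}
\item $\Imm R$ is a subalgebra containing $N$ and $R(e_{n1})$, and the degree-$(2-n)$ component of $R(e_{n1})$ is nonzero. This forces $\dim\Imm R\ge n^2-n$, so $\Imm R$ is a maximal space of singular matrices.
\item Meshulam's theorem, together with the $\Imm R$-bimodule property of $\ker R$, then gives $\Imm R=B$ and $\ker R=K$ up to an antiautomorphism.
\item The inverse of $R$ is a derivation $\delta\colon B\to M_n(F)/K$. It is inner, $\delta(b)=bX-Xb+K$.
\item The condition $\delta(N)=E+K$ gives $X=L+f(N)$ modulo $K$, and $f(N)$ is removed by conjugating with $u(N)$.
\end{itemize}

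\textbf{Nonzero weight.} Your outline has the right shape: complementary subalgebras $\ker R^n\oplus\ker(R+\id)^{n-1}$, triangular form, cumulative sums on $D_n(F)$. But both decisive steps are only asserted.

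First, the triangular shape needs an argument. In the paper it comes from three facts: $a=R(1)=\diag\{0,1,\dots,n-1\}$, the $R$-invariance of the strips $V_d$, and the $L_a$-invariance of both kernels. Together these force each kernel to be spanned by matrix units. Then $e_{ij}e_{ji}=e_{ii}$ rules out lower units in $\ker(R+\id)^{n-1}$.

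Second, and more seriously, ``$R$ is determined by a Rota--Baxter operator on the diagonal'' is false. Conjugation by an invertible diagonal matrix fixes $R|_{D_n(F)}$ and both kernels, but changes off-diagonal coefficients. For example, it turns $B_1(e_{23})=-e_{23}-e_{12}$ into $-e_{23}-\frac{d_1d_3}{d_2^2}e_{12}$. So at best the diagonal determines $R$ up to such a conjugation, and you give no mechanism even for that.

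The missing idea is Lemma~\ref{lem:hom-known}:
\begin{itemize}
\item $\Phi_0=R(R+\id)^{-1}$ and $\Phi_1=(R+\id)R^{-1}$ are algebra endomorphisms of the two subalgebras. They commute with $\ad_a$ and shift $L_a$-eigenvalues by $\pm1$.
\item Hence they are weighted shifts, whose weights are fixed by the adjacent ones.
\item A diagonal conjugation normalizes $\Phi_1$.
\item The identity $c\Phi_0(d)=R(\Phi_1(c)d-c\Phi_0(d))$ with $c=e_{p,p+1}$, $d=e_{p,p-1}$ forces the weights of $\Phi_0$ to equal $1$.
\item Then $R=-(\id-\Phi_1)^{-1}$ and $R=\Phi_0(\id-\Phi_0)^{-1}$ yield $B_1$.
\end{itemize}

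Finally, your fallback induction on corners $eM_ne$ is not set up. $R$ need not preserve a corner: for $2\le i<j$, $B_1(e_{ij})$ has the component $-e_{1,j-i+1}$, which lies outside $(E-e_{11})M_n(F)(E-e_{11})$. And a compression $x\mapsto eR(x)e$ is not in general a Rota--Baxter operator, let alone an extremal one.
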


It is worth noting that, up to equivalence, there exists a~unique Rota---Baxter operator of weight~1 on the split Cayley---Dickson algebra, the 8-dimensional simple alternative algebra, whose minimal polynomial has maximal degree~\cite{Panasenko}.

Let us comment on the proof of Theorem~\ref{thm:main}.
In the case of weight zero, maximal nilpotency forces $\Imm R$ to be a maximal linear space of singular matrices (Lemma~\ref{lem:zero-image}). 
Hence, the inverse of $R$ becomes a derivation which is inner in general (Lemma~\ref{lem:inner-derivation}).
Finally, the condition on the image of the identity fixes~$R$ up to the required equivalence.

In the case of nonzero weight, we show that, up to some conjugation, $\ker R^n$ consists of lower-triangular matrices and $\ker (R+\id)^n$ consists of strictly upper-triangular matrices (Lemma~\ref{lem:triangular}).
Endomorphisms $R(R+\id)^{-1}$ and $(R+\id)R^{-1}$ of $\ker R^n$ and $\ker (R+\id)^n$, respectively, recover $R$ as the geometric sum of two opposite shifts (Lemma~\ref{lem:shifts}).

\section{AI usage}

Let us clarify the author's interest in the problem and the AI usage for the presented proof.
In summer of 2020, the author formulated to himself a problem of description of all Rota---Baxter operators on $M_n(F)$ whose minimal polynomial has maximal degree. 
By that moment, the author knew the classification of such Rota---Baxter operators of any weight on $M_3(F)$ and that Theorem~\ref{thm:main} holds for $n\leq 3$.
Further, this led the author to conjecture that such extremal Rota---Baxter operators have only one orbit for a~given weight.
In particular, the author reported this conjecture in his talk given at the Shirshov seminar (Sobolev Institute of Mathematics) on 30 October 2024.
Then it was checked with the help of the computer algebra system \texttt{Singular} that Theorem~\ref{thm:main} holds when $\lambda = 0$ and $n = 4$.
However, the strategy of involving \texttt{Singular} to guess the corresponding conjugate matrix in the general case looked not so clear and easy.

The author's idea for the proof of Theorem~\ref{thm:main} when $\lambda\neq0$ was the following.
First, describe all Rota---Baxter finite-dimensional modules over $F^n$ with $R$ satisfying 
$R(e_i) = e_{i+1} + \ldots + e_n$ for $i<n$ and $R(e_n) = 0$ (due to Lemma~\ref{lem:diag-known}). 
Second, combine such actions of the diagonal subalgebra on all diagonal subspaces $V_d = \Span\{e_{ij} \mid i-j =d\}$ and try to get the required proof.

In August 2026, the author used  Aristotle~\cite{Aristotle} to generate candidate proofs of a) and b) of Theorem~\ref{thm:main}.
The suggested proofs were actually long and technical.
After some time, the author used ChatGPT 5.6 Sol in attempt to find a canonical (due to T. Tao's concept~\cite{Tao}) proof of this result from the Book.
After several attempts, ChatGPT 5.6 Sol found a simpler proof of Theorem~\ref{thm:main}a and clarified Aristotle's proof of Theorem~\ref{thm:main}b.
However, the prepared proof still required substantial rewriting and splitting into lemmas.
So, the author adapted, reorganized, and verified the proof.
The author is responsible for the content of the paper.

For the case of weight zero, an elegant linear algebra fixes the required image subalgebra and the kernel subspace (Lemma~\ref{lem:zero-image}).
The author believes that Lemma~\ref{lem:inner-derivation} follows from some deep results but the presented simple proof better aligns with the concept of a~proof from the Book.

For the case of nonzero weight, the very simple Lemma~\ref{lem:hom-known} occurs to be useful; and again the image of the identity plays a crucial role, compare with~\cite{GoncharovGubarevM3,GubarevUnital,GubarevSpectrum}.

\section{Preliminaries}

Throughout the paper, $F$ is a field of characteristic zero.

For every Rota---Baxter operator $R$ of weight~$\lambda$ on an algebra~$A$, we have that $\Imm R$ is a subalgebra of~$A$.
If $\lambda\neq0$, then $\Imm (R+\lambda\id)$ is a subalgebra of~$A$ as well. If $\lambda = 0$, then $\ker R$ is an $\Imm R$-bimodule, it means that $\Imm R\cdot \ker R,\ker R\cdot \Imm R\subseteq \ker R$.

\begin{lemma}[{\cite{Guo}}]
Given an RB-operator $R$~of weight~$\lambda$,

(a) the operator $-R-\lambda\id$ is an RB-operator of weight $\lambda$,

(b) the operator $\lambda^{-1}R$ is an RB-operator of weight 1, provided $\lambda\neq0$.
\end{lemma}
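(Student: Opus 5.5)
Both parts are verified directly from the defining identity~\eqref{RB}, by expanding the right-hand side and comparing it with the product on the left; no structural information about~$A$ is needed.

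For (a), set $P = -R-\lambda\id$, fix $x,y\in A$, and expand both sides of~\eqref{RB} for~$P$. On the left,
\[
P(x)P(y) = (R(x)+\lambda x)(R(y)+\lambda y) = R(x)R(y) + \lambda R(x)y + \lambda xR(y) + \lambda^2 xy .
\]
For the right-hand side, first compute the argument:
\[
P(x)y + xP(y) + \lambda xy = -R(x)y - xR(y) - \lambda xy = -\bigl(R(x)y + xR(y) + \lambda xy\bigr).
\]
Write $w = R(x)y+xR(y)+\lambda xy$. Then $P(-w) = R(w) + \lambda w$. By~\eqref{RB} for~$R$, we have $R(w) = R(x)R(y)$, so
\[
P(-w) = R(x)R(y) + \lambda R(x)y + \lambda xR(y) + \lambda^2 xy ,
\]
which coincides with the expansion of $P(x)P(y)$. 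The only point needing care is keeping track of signs: the argument is the negative of~$w$, and the minus sign in $P$ cancels it.

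For (b), set $Q = \lambda^{-1}R$. Multiplying~\eqref{RB} by $\lambda^{-2}$ gives
\[
Q(x)Q(y) = \lambda^{-2}R\bigl(R(x)y + xR(y) + \lambda xy\bigr).
\]
Pulling one factor $\lambda^{-1}$ inside the argument and the other in front of~$R$ turns the right-hand side into
\[
Q\bigl(Q(x)y + xQ(y) + xy\bigr),
\]
which is~\eqref{RB} with weight~$1$. This uses only linearity of~$R$ and the hypothesis $\lambda\neq0$.

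Neither step presents a real obstacle. The only place where care is needed is the sign bookkeeping in~(a).
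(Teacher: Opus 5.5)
Your verification is correct: both parts follow from direct expansion of \eqref{RB}, using only bilinearity of the product and linearity of $R$, and your sign bookkeeping in (a) and scalar bookkeeping in (b) check out. The paper gives no proof of its own and simply cites \cite{Guo}, where the lemma is established by this same routine computation.
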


Given an algebra $A$, denote the set of all RB-operators of weight~$\lambda$ on~$A$ by $\RB_\lambda(A)$.
Define a map $\phi_\lambda\colon \RB_\lambda(A)\to \RB_\lambda(A)$ as $\phi(R)=-R-\lambda\id$.
Note that $\phi^2 = \id$.

\begin{lemma}[{\cite{BGP}}]
Given an algebra $A$, an RB-operator $R$ on $A$ of weight $\lambda$,
and $\psi\in\Aut(A)$, the operator $R^{(\psi)} = \psi^{-1}R\psi$
is an RB-operator of weight~$\lambda$ on~$A$.
\end{lemma}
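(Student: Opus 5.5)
The statement to prove is the last lemma: for $\psi\in\Aut(A)$ and $R\in\RB_\lambda(A)$, the operator $R^{(\psi)}=\psi^{-1}R\psi$ is again a Rota---Baxter operator of weight~$\lambda$. The plan is a direct verification of identity~\eqref{RB}. It uses only that $\psi$ is a linear bijection with $\psi(xy)=\psi(x)\psi(y)$, and that $\psi^{-1}$ is then multiplicative as well.

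Fix $x,y\in A$ and put $x'=\psi(x)$, $y'=\psi(y)$.

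\textbf{Left-hand side.} Since $\psi^{-1}$ is multiplicative,
\[
R^{(\psi)}(x)R^{(\psi)}(y)=\psi^{-1}(R(x'))\,\psi^{-1}(R(y'))=\psi^{-1}\bigl(R(x')R(y')\bigr).
\]

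\textbf{Apply the identity for $R$.} By~\eqref{RB} for $R$ applied to $x',y'$, this equals
\[
\psi^{-1}R\bigl(R(x')y'+x'R(y')+\lambda x'y'\bigr).
\]

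\textbf{Rewrite the inner terms.} Using multiplicativity of $\psi$ and $\psi\psi^{-1}=\id$,
\[
R(x')y'=\psi\bigl(\psi^{-1}R\psi(x)\bigr)\psi(y)=\psi\bigl(R^{(\psi)}(x)\,y\bigr).
\]
In the same way, $x'R(y')=\psi\bigl(x\,R^{(\psi)}(y)\bigr)$ and $\lambda x'y'=\psi(\lambda xy)$.

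\textbf{Collect.} By linearity of $\psi$, the argument of $\psi^{-1}R$ is $\psi$ applied to $R^{(\psi)}(x)y+xR^{(\psi)}(y)+\lambda xy$. The whole expression is therefore
\[
\psi^{-1}R\psi\bigl(R^{(\psi)}(x)y+xR^{(\psi)}(y)+\lambda xy\bigr)=R^{(\psi)}\bigl(R^{(\psi)}(x)y+xR^{(\psi)}(y)+\lambda xy\bigr),
\]
which is exactly~\eqref{RB} for $R^{(\psi)}$.

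There is no real obstacle here. The only point to check is that $\psi^{-1}$ is an automorphism, so multiplicativity can be used in both directions; this follows by applying $\psi$ to $\psi^{-1}(a)\psi^{-1}(b)$. The same computation works if $\psi$ is an antiautomorphism, such as the transpose used in Theorem~\ref{thm:main}. Then the products reverse, and the two middle terms $R(x')y'$ and $x'R(y')$ simply swap roles, so the identity still holds.
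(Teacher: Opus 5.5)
Your direct verification is correct. The antiautomorphism remark also checks out, since the defining identity is then applied to the pair $(y',x')$, and $R(y')x'$, $y'R(x')$ pull back to $xR^{(\psi)}(y)$ and $R^{(\psi)}(x)y$. The paper does not prove this lemma itself but only cites it, adding that it also holds for antiautomorphisms such as the transpose; the standard argument is exactly the routine computation you give.
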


The same result holds when $\psi$ is an antiautomorphism of $A$, i.\,e.,
an automorphism of~$A$, considered as a~vector space, satisfying $\psi(xy) = \psi(y)\psi(x)$ for all $x,y\in A$;
we will apply transpose on a~matrix algebra.

By~\cite{GubarevSpectrum,GubarevUnital}, the Rota---Baxter $\lambda$-index ($\RB(\lambda)$-index) of $A$ is defined as follows:
$$
\rb_\lambda(A) = \min\{n\in\mathbb{N}\mid \mbox{for all }R\in \RB_\lambda(A)\
 \mbox{exists } 0\leq k\leq n:\ R^k(R+\lambda\id)^{n-k} = 0\}.
$$
If such a number does not exist, then set $\rb_\lambda(A) = \infty$.

\begin{theorem}[{\cite{GubarevSpectrum}}] \label{theo:rb-index}
Let $F$ be a field of characteristic zero. Then
$\rb_\lambda(M_n(F)) = 2n-1$ for all $\lambda\in F$.
\end{theorem}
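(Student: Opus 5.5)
The plan is to prove the equality $\rb_\lambda(M_n(F)) = 2n-1$ as two inequalities. First I extend scalars to the algebraic closure $\bar F$; this is harmless, because the RB identity and the vanishing of $R^k(R+\lambda\id)^{m-k}$ are preserved and reflected by field extension. By the Lemma attributed to \cite{Guo}, for $\lambda\neq0$ we may rescale to weight $1$.

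\textbf{Lower bound.} I would show $\rb_\lambda \ge 2n-1$ using the operators $B_0$ and $\lambda B_1$ from the introduction. Both satisfy the RB identity, which is a direct index computation on matrix units.
\begin{itemize}
\item For $B_0$: the operator raises the "diagonal number" $j-i$ by one on the upper part and sends the lower part toward it. Following $e_{n1}$ through the diagonals $V_{-(n-1)},\dots,V_{n-1}$ then shows $B_0^{2n-2}(e_{n1})\neq0$, namely a multiple of $e_{1n}$.
\item For $B_1$: the same bookkeeping on $\ker B_1^{n}$ and $\ker(B_1+\id)^{n}$ shows that no product $B_1^k(B_1+\id)^{2n-2-k}$ vanishes.
\end{itemize}

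\textbf{Upper bound.} This is the main work. Over $\bar F$ I use the standard spectral fact that an RB-operator of weight $\lambda$ on a unital finite-dimensional algebra has spectrum contained in $\{0,-\lambda\}$. This should follow from the identity $R(1)^k=k!R^k(1)$ type relations, or from the homomorphism $R\colon(A,\circ)\to A$, where $x\circ y=R(x)y+xR(y)+\lambda xy$.

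Hence $A=A_0\oplus A_{-\lambda}$ with $A_0=\ker R^{N}$ and $A_{-\lambda}=\ker(R+\lambda\id)^N$, and it suffices to show that the nilpotency indices $p$ of $R|_{A_0}$ and $q$ of $(R+\lambda\id)|_{A_{-\lambda}}$ satisfy $p+q\le 2n-1$ (for $\lambda=0$ simply $p\le 2n-1$). The images $\Imm R$ and $\Imm(R+\lambda\id)$ are subalgebras, and $R$ restricted to $A_{-\lambda}$ is invertible. I would use this to relate the chains $\Imm R^k\cap A_0$ and $\Imm (R+\lambda\id)^k\cap A_{-\lambda}$ to chains of subalgebras and ideals in $M_n(\bar F)$.

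The key step, and the one I expect to be hardest, is to show that these images carry a flag structure on $\bar F^n$: each successive application of $R$ (respectively $R+\lambda\id$) must move strictly along a flag of length at most $n$ in the "row" direction or in the "column" direction. That would give at most $(n-1)+(n-1)+1$ nonzero steps in total. My first attempt would be the following.
\begin{itemize}
\item Show that $\Imm R^{k}$ for $k\ge1$ consists of matrices nilpotent modulo an appropriate subalgebra. For $\lambda=0$, $R(x)$ is nilpotent because $R(x)^m=m!R((R(x)^{m-1}\cdots))$ forces $\operatorname{tr}$-vanishing via $R(1)$.
\item Apply Burnside/triangularization to put $\Imm R$ into upper triangular form.
\item Then track how $R$ interacts with the left and right annihilator flags $\{v: \Imm R^k v=0\}$ and their dual, using that $\ker R$ is an $\Imm R$-bimodule when $\lambda=0$.
\end{itemize}
Counting the possible strict drops in the two flags, each of length $\le n$, bounds the index by $2n-1$.
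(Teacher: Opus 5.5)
\textbf{Verdict: there is a genuine gap, in the upper bound.}

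Your lower bound is fine. $B_0^{2n-2}(e_{n1})$ is a nonzero integer multiple of $e_{1n}$. $B_1$ is nilpotent of index $n$ on $D_n(F)$, and $B_1+\id$ is nilpotent of index $n-1$ on $V_{-1}$. The reductions to $\bar F$, to weight $1$, and to proving $p+q\le 2n-1$ are also correct.

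The problem is the upper bound, which is the entire content of the theorem (the paper only cites it). What you write there is a program, and its one concrete step is false.

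\begin{itemize}
\item For $\lambda=0$ the elements of $\Imm R$ need not be nilpotent. By \eqref{eq:B0}, $B_0(e_{21})=-e_{11}$.
\item For every extremal operator, Lemma~\ref{lem:zero-image} shows that $\Imm R$ is, up to an (anti)automorphism, the algebra of all matrices with zero last row. This algebra contains a copy of $M_{n-1}(F)$, so it cannot be triangularized for $n\ge 3$.
\item The factorial identity you invoke holds only for $x=1$: $R(1)^m=m!\,R^m(1)$. It shows only that $R(1)$ is nilpotent. For general $x$ one only knows that $R(x)$ is singular (Lemma~\ref{lem:upper-bound}).
\end{itemize}
Once triangularization is gone, nothing supports the claim that each application of $R$ moves strictly along a flag of length $n$. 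The count $(n-1)+(n-1)+1$ is not derived from anything.

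\textbf{The missing idea: work with $a=R(1)$.} Putting $1$ into \eqref{RB} on either side gives $[L_a,R]=R(R+\lambda\id)=[\rho_a,R]$, where $\rho_a(x)=xa$ (Lemma~\ref{lem:hom-known}a).

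\emph{Case $\lambda=1$.}
\begin{itemize}
\item Every generalized eigenspace of $R$ is $L_a$- and $\rho_a$-invariant. On one where $R+\id$ is invertible, $\Phi_0=R(R+\id)^{-1}$ satisfies $L_a\Phi_0=\Phi_0(L_a+\id)$.
\item If $R$ is also invertible there, then $\Phi_0^{-1}L_a\Phi_0=L_a+\id$, and taking traces shows the space is zero. This is exactly the spectral fact you quoted.
\item On $A_0$, $\Phi_0$ raises the eigenvalues of $L_a$ (which are those of $a$) by $1$. Since $a$ has at most $n$ distinct eigenvalues, $\Phi_0^n=0$, i.e.\ $R^n=0$ on $A_0$. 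Symmetrically, $(R+\id)^n=0$ on $A_{-1}$, via $\Phi_1=(R+\id)R^{-1}$.
\item Suppose both indices were $n$. Then $a$ is diagonalizable with eigenvalues $\mu,\mu+1,\dots,\mu+n-1$; conjugate it to $\diag(\mu,\dots,\mu+n-1)$. Now $A_0$ and $A_{-1}$ are spanned by matrix units. Since $\Phi_0,\Phi_1$ commute with $L_a-\rho_a$, they send $e_{ij}$ into $Fe_{i+1,j+1}$ and $Fe_{i-1,j-1}$ respectively.
\item Both maximal chains must then consist of the $e_{kk}$, which contradicts $A_0\cap A_{-1}=0$.
\end{itemize}

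\emph{Case $\lambda=0$.}
\begin{itemize}
\item $a$ is nilpotent, since $a^k=k!R^k(1)$, so $a^n=0$. The identity gives $[L_a,R^k]=kR^{k+1}$, hence $(2n-1)!\,R^{2n}=\sum_i(-1)^i\binom{2n-1}{i}L_a^{2n-1-i}RL_a^{i}=0$.
\item The sharp bound needs more. In $(2n-2)!\,R^{2n-1}$ the only surviving term is a multiple of $L_a^{n-1}RL_a^{n-1}$. This vanishes unless $a$ is conjugate to $N$.
\item In that case one must show that $R$ strictly raises the grading $\bigoplus_d A_d$, which is the content of Lemma~\ref{lem:zero-known}.
\end{itemize}
This analysis is what your flag heuristic would have to replace.
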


To prove the main result, we will apply the following facts for $\lambda = 0$.

\begin{lemma}[{\cite[Lemma 5.17a]{GubarevUnital}}] \label{lem:upper-bound}
Given a Rota---Baxter operator of weight~0 on $M_n(F)$,
every matrix in $\Imm R$ is singular and $\dim\Imm R\leq n^2-n$.
\end{lemma}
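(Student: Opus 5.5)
Since $R$ has weight~$0$, the identity~\eqref{RB} gives $R(x)R(y)=R(R(x)y+xR(y))$, so $\Imm R$ is a subalgebra of $M_n(F)$. The plan is to show that every element $a=R(x)$ of $\Imm R$ is nilpotent; a matrix algebra of nilpotent matrices consists of singular matrices. The bound $\dim\Imm R\le n^2-n$ will then follow from a known fact about linear spaces of singular matrices.

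\textbf{Step 1: images are nilpotent.} Induction on~$k$ using~\eqref{RB} gives the standard identity
\[
R(x)^k=k!\,R\bigl((R(x)\cdots)\bigr),
\]
or more usefully, every power $R(x)^k$ lies in $\Imm R$. I would argue with traces instead. Put $y=1$ in~\eqref{RB}: $R(x)R(1)=R(R(x)+xR(1))$. A cleaner route is the known fact that every Rota---Baxter operator of weight~$0$ on a finite-dimensional algebra over a field of characteristic zero has $\Imm R$ consisting of elements $a$ with $\operatorname{tr}(a^k)=0$. To obtain this, I would use Theorem~\ref{theo:rb-index}: for $\lambda=0$ it yields $R^{2n-1}=0$, so $R$ is nilpotent as a linear map. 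Then I would show, by induction on~$k$ from~\eqref{RB}, that
\[
R(x)^k=R\bigl(\textstyle\sum (\ldots)\bigr),
\]
and more precisely that left multiplication $L_{R(x)}$ restricted appropriately is conjugate to something nilpotent. Concretely, apply the trace form: $\operatorname{tr}(R(x)R(y))$ is symmetric. The cleanest argument I would attempt is the following. For $a=R(x)$, the operator $T_a\colon y\mapsto R(ay)+R(x)$-terms satisfies $L_a R = R(L_a + R_x)$ on suitable inputs, which shows $L_a$ preserves a flag built from $\ker R^j$ with nilpotent action on the graded pieces. Since $R$ is nilpotent, this flag is finite and full, hence $L_a$ is nilpotent and $a^N=0$. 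This step is the main obstacle, and I would first try to exploit $R^{2n-1}=0$ through the filtration $\ker R\subseteq\ker R^2\subseteq\cdots$.

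\textbf{Step 2: singularity.} Once $a$ is nilpotent, $\det a=0$, so every matrix in $\Imm R$ is singular.

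\textbf{Step 3: dimension bound.} By Dieudonné's theorem (or Flanders' theorem), a linear subspace of $M_n(F)$ consisting of singular matrices has dimension at most $n^2-n$. Hence $\dim\Imm R\le n^2-n$.

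Alternatively, Step 3 can be done directly with the nilpotent subalgebra from Step 1. By Wedderburn's principal theorem, $\Imm R$ is a nilpotent algebra of matrices, so it is conjugate into the strictly upper-triangular matrices. That space has dimension $n(n-1)/2\le n^2-n$.
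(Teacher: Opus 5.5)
\textbf{There is a genuine gap: Step~1 is false, so the argument fails there.} Elements of $\Imm R$ need not be nilpotent, nor satisfy $\operatorname{tr}(a^k)=0$, even when $R$ itself is nilpotent.

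\emph{A counterexample.} Take $R(x)=x_{21}e_{11}$. It satisfies \eqref{RB} with $\lambda=0$: both sides equal $x_{21}y_{21}e_{11}$, because $(e_{11}y)_{21}=0$ and $(xe_{11})_{21}=x_{21}$. It has $R^2=0$, yet $\Imm R=Fe_{11}$, which contains a non-nilpotent idempotent.

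\emph{The paper's own operator behaves the same way.} By \eqref{eq:B0}, $B_0(e_{21})=-e_{11}$. After conjugation, the extremal operators have image $B\ni\diag(E_{n-1},0)$ (Lemma~\ref{lem:zero-image}).

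The identity you were reaching for, $R(1)^k=k!\,R^k(1)$, controls only $R(1)$. For a general $R(x)$ in a noncommutative algebra there is no analogue, and no flag argument can establish a false conclusion. For the same reason your ``alternative Step~3'' is wrong. It would give $\dim\Imm R\leq n(n-1)/2$, which contradicts $\dim\Imm R=n^2-n$ for the operators classified in this paper.

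\textbf{What is missing.} You only need that $\Imm R$ contains no \emph{invertible} matrix, and this is elementary.
\begin{itemize}
\item First, $E\notin\Imm R$. If $R(x)=E$, then $E=R(x)R(x)=R(R(x)x+xR(x))=R(2x)=2E$, which is impossible since $\charr F=0$.
\item Second, suppose some $a\in\Imm R$ were invertible. Cayley---Hamilton gives $E=-c_0^{-1}(a^n+c_{n-1}a^{n-1}+\ldots+c_1a)$ with $c_0=(-1)^n\det a\neq0$. The right-hand side has no constant term, so it lies in the subalgebra $\Imm R$. That subalgebra is closed under products but is not assumed to contain $E$, which is why the constant-free form matters. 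This contradicts the first point.
\end{itemize}
Hence every matrix of $\Imm R$ is singular. Your main Step~3 (Dieudonn\'e's bound for linear spaces of singular matrices) then correctly gives $\dim\Imm R\leq n^2-n$. The detour through Theorem~\ref{theo:rb-index} is unnecessary.
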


\begin{lemma}[{\cite[Theorem 3]{Meshulam}}] \label{lem:meshulam}
Let $W\subset M_n(F)$ be a linear subspace consisting of singular matrices.
If $\dim W = n^2-n$, then either all matrices in $W$ have a common nonzero kernel vector or
all their images are contained in a fixed hyperplane.
\end{lemma}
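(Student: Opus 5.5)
The plan is a direct linear-algebra argument rather than an induction. Since $\charr F=0$, the field $F$ is infinite, so polynomial identities in a parameter $t$ can be read off coefficient by coefficient.

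\emph{Step 1: reduce to a rank $n-1$ matrix in normal form.} Let $r$ be the maximal rank of a matrix in~$W$; then $r\le n-1$. By Flanders' theorem, a space of $n\times n$ matrices of rank at most $r$ has dimension at most $nr$. Since $\dim W=n^2-n$, this forces $r=n-1$. Choose $A\in W$ of rank $n-1$. Replacing $W$ by $PWQ$ for suitable invertible $P,Q$ does not affect either conclusion. So we may assume $A=\diag(1,\ldots,1,0)$.

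\emph{Step 2: extract conditions from $\det(A+tB)=0$.} For $B\in W$ write
$$
B=\begin{pmatrix} B_0 & v_B\\ u_B & b_B\end{pmatrix},
$$
with $B_0$ of size $(n-1)\times(n-1)$. For all $t$ we have $\det(A+tB)=0$, and likewise with $B$ replaced by a linear combination of two elements of~$W$.
\begin{itemize}
\item The coefficient of $t$ gives $b_B=0$.
\item The coefficient of $t^2$ then gives $u_Bv_B=0$. Polarizing, $u_Bv_C+u_Cv_B=0$ for all $B,C\in W$.
\item More generally, expanding via the Schur complement, $\det(I+tB_0)\cdot u_B(I+tB_0)^{-1}v_B$ vanishes identically in~$t$. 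Hence $u_BB_0^kv_B=0$ for all $k\ge0$, together with the polarized versions.
\end{itemize}

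\emph{Step 3: dimension count.} Consider the map $W\to F^{n-1}\times F^{n-1}$, $B\mapsto(u_B,v_B)$. Its kernel sits inside the block-diagonal matrices with zero corner, which have dimension $(n-1)^2$. So the image $L$ has dimension at least $n^2-n-(n-1)^2=n-1$.

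By Step 2, $L$ is totally isotropic for the split symmetric form $\langle(u,v),(u',v')\rangle=uv'+u'v$ on $F^{2(n-1)}$. Such a subspace has dimension at most $n-1$. Hence every inequality is an equality:
\begin{itemize}
\item $L$ is a Lagrangian subspace;
\item the kernel is exactly the set of all matrices $\begin{pmatrix}M&0\\0&0\end{pmatrix}$, so every block $M$ occurs in~$W$.
\end{itemize}

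\emph{Step 4: identify the Lagrangian.} Lagrangians of this split form include $\{u=0\}$ and $\{v=0\}$. If $L=\{u=0\}$, the last row of every matrix in $W$ vanishes; undoing the change of bases, all images lie in a fixed hyperplane. If $L=\{v=0\}$, the last column vanishes, giving a common nonzero kernel vector. So it suffices to rule out all other Lagrangians. These are "mixed", e.g. graphs $u=vS^T$ with $S$ skew-symmetric, together with their degenerate variants.

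This is the main obstacle. The extra information from Step 3 is that we may add an arbitrary block $M$ to $B_0$ without leaving~$W$. Then for fixed $B\in W$, the identity $u_B(B_0+M)v_B=0$ from Step 2 must hold for all $M$, which forces $u_B^Tv_B^T=0$ as a rank-one matrix. Hence for each $B$, either $u_B=0$ or $v_B=0$.

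It follows that $L$ is contained in $\{u=0\}\cup\{v=0\}$. A vector space that is a union of two subspaces lies in one of them. By dimension, $L$ equals $\{u=0\}$ or $\{v=0\}$, which finishes the argument.

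I would double-check the use of the higher coefficients in Step 2 carefully, since the whole conclusion rests on the identity $u_BMv_B=0$ holding for arbitrary~$M$. If the Schur-complement expansion turns out to give only weaker (polarized) relations, I would instead fall back on an induction on~$n$. In that approach one restricts to the $(n-1)\times(n-1)$ blocks, which form all of $M_{n-1}(F)$ by Step 3, and uses the cubic coefficient of $\det(A+tB)$ directly.
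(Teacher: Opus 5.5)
Your proof is correct. It necessarily takes a different route from the paper, which gives no proof of this lemma and simply imports it from Meshulam (it is the classical Dieudonn\'e-type description of singular subspaces of maximal dimension).

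Your argument closes without gaps:
\begin{itemize}
\item Flanders' inequality with $r=n-2$ is used only to produce an element of rank $n-1$.
\item That element is normalized to $\diag(1,\ldots,1,0)$.
\item The coefficients of $t$, $t^2$, $t^3$ in $\det(A+tB)$ give the needed identities.
\item The isotropy bound forces $\{\diag(M,0)\mid M\in M_{n-1}(F)\}\subseteq W$.
\item The rank-one trick then finishes.
\end{itemize}

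The worry in your last paragraph is unfounded. Once $b_B=0$ and $u_Bv_B=0$, one has $\det(A+tB)=-t^2u_B\,\mathrm{adj}(I+tB_0)\,v_B$. Here $\mathrm{adj}(I+tB_0)=I+t(\mathrm{tr}(B_0)I-B_0)+O(t^2)$, so the $t^3$ coefficient is exactly $u_BB_0v_B$. Applying this to $B+\diag(M,0)\in W$ and subtracting gives $u_BMv_B=0$ for all $M$, so no inductive fallback is needed.

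Likewise, the discussion of \emph{mixed} Lagrangians is superfluous. Isotropy of $L$ is used only to get $\dim L\le n-1$, and the final step places $L$ directly inside $\{u=0\}$ or $\{v=0\}$.

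As for what each approach buys: the citation costs nothing, and the classical result needs no hypothesis on $F$. Your argument is elementary and self-contained modulo Flanders' bound. The price is that $F$ must be large enough (infinite suffices), both to read off coefficients in $t$ and for Flanders' inequality in its classical form. This is harmless here, since the paper assumes $\charr F=0$ throughout, which is the only setting in which Lemma~\ref{lem:zero-image} uses the statement.
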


\begin{lemma}[{\cite[Theorem\,5.22,\,Lemma\,5.23]{GubarevUnital}}] \label{lem:zero-known}
Let $R$ be a Rota---Baxter operator of weight~0 on $M_n(F)$ such that $R^{2n-2}\neq0$.
Then, up to conjugation with an automorphism,
$R(1) = N := e_{12} + \ldots + e_{n-1,n}$ and
$R(A_d)\subset\bigoplus_{r>d}A_r$, where
$A_d = \Span\{e_{ij}\mid j-i=d\}$.
\end{lemma}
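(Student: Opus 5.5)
The plan is to first use $R(1)$ to build operators that control the powers of $R$, and then turn the extremality condition $R^{2n-2}\neq0$ into structural information. Set $N:=R(1)$ and let $L_N,\ R_N$ denote left and right multiplication by $N$ on $M_n(F)$. Putting $y=1$, respectively $x=1$, into~\eqref{RB} with $\lambda=0$ gives
$$
R^2 = R_N R - R R_N = [R_N,R], \qquad R^2 = [L_N,R].
$$
Consequently $[\ad_N,R]=[L_N-R_N,R]=0$, so $R$ commutes with $\ad_N$. By induction, $[R_N,R^k]=kR^{k+1}$, because $[R_N,\cdot]$ is a derivation of $\mathrm{End}(M_n(F))$. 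Hence
$$
R^{k+1}=\frac{1}{k!}\,(\ad_{R_N})^k(R),
$$
and the same formula holds with $L_N$ in place of $R_N$. Applying this identity to $1$ also gives $N^k=k!\,R^k(1)$, so $N$ is nilpotent, since $R$ is nilpotent by Theorem~\ref{theo:rb-index}.

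Next I locate $N$. Let $m$ be the size of the largest Jordan block of $N$. Then $R_N^m=0$, and therefore $(\ad_{R_N})^{2m-1}=0$ on $\mathrm{End}(M_n(F))$. The displayed formula then forces $R^{2m}=0$. Since $R^{2n-2}\neq0$, we get $2m>2n-2$, that is, $m=n$. So $N$ is a single nilpotent Jordan block, and after conjugating by an automorphism we may assume $N=e_{12}+\ldots+e_{n-1,n}$. This gives $R(1)=N$. The same identities also give $R(N^k)=k!\,R^{k+1}(1)=N^{k+1}/(k+1)$. Thus $R$ strictly raises degree on the centralizer $\Span\{N^k\}$ of $N$, which is exactly the centralizer part of the grading $A_d$.

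The remaining step, which I expect to be the main obstacle, is to show that $R(A_d)\subset\bigoplus_{r>d}A_r$ for every $d$. Here I would use that $M_n(F)$ is an $\mathfrak{sl}_2$-module for the triple $(N,h,N^{T})$ with $h=\diag(n-1,n-3,\ldots)$, whose $h$-eigenspaces are the $A_d$. Since $R$ commutes with $\ad_N$, $R$ is a morphism of $F[\ad_N]$-modules. The module $M_n(F)$ splits into $n$ Jordan strings for $\ad_N$, of lengths $1,3,\ldots,2n-1$, each generated by a lowest weight vector in $A_{-k}$. So $R$ is determined by the images of these lowest vectors. The condition that $R$ commutes with $\ad_N$ alone does not force the filtration. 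The extra input must be $[R_N,R]=R^2$ together with the extremality condition
$$
(\ad_{R_N})^{2n-3}(R)\neq0.
$$
Because $R_N$ raises degree by exactly $1$ and $(\ad_{R_N})^{2n-2}$ already vanishes on the top-degree pieces, I would decompose $R=\sum_s R_s$ into homogeneous components $R_s\colon A_d\to A_{d+s}$. Comparing homogeneous components of $R^2=[R_N,R]$ should show that the lowest nonzero component must have $s\geq 1$. The reason is that a component with $s\leq 0$ would, via the equation in lowest degree, make $R_s$ a nilpotent map commuting with $\ad_N$ and satisfying $R_s^2=[R_N,R_s]$ in degree $2s$ versus $s+1$. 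For $s\leq0$ we have $2s<s+1$, which forces $R_s^2=0$ and $[R_N,R_s]=0$ inductively. From there I would try to kill $R_s$ using $R_s(1)=0$ and $\ad_N$-equivariance on each string.

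If this degree bookkeeping fails, the fallback is a conjugation by a unipotent element of the centralizer of $N$, i.e.\ a polynomial $1+\sum c_kN^k$. Such a conjugation keeps $R(1)=N$ and can be used to remove the remaining nonpositive-degree components.
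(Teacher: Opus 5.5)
Your derivation of $R(1)=N$ is correct and self-contained. From $[R_N,R]=[L_N,R]=R^2$ you get $[\ad_N,R]=0$ and $R^{k+1}=\frac{1}{k!}\ad_{R_N}^k(R)$, so $N^m=0$ forces $R^{2m}=0$, and $R^{2n-2}\neq0$ leaves only a single Jordan block. The paper quotes this lemma from the literature without proof, so I judge your argument on its own. The gap is the second assertion $R(A_d)\subset\bigoplus_{r>d}A_r$: you only sketch it, and the sketch does not close. For the lowest component $R_s$ with $s\le0$, the equation $R^2=[R_N,R]$ does give $R_s^2=0$. It does not give $[R_N,R_s]=0$: in degree $s+1$ it reads $[R_N,R_s]=\sum_{p+q=s+1}R_pR_q$, which contains cross terms such as $R_0R_1+R_1R_0$. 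The condition $R_s(1)=0$ only controls the one-dimensional string spanned by $1$. The fallback is void. Any automorphism keeping $R(1)=N$ is conjugation by some $u(N)$ with $u(0)\neq0$, and after normalizing $u(0)=1$ this is the identity plus degree-raising maps. So it leaves the lowest homogeneous component of $R$ unchanged. Hence the grading property must be proved for \emph{every} $R$ with $R(1)=N$; no conjugation can create it.

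The missing idea is that your remark ``commuting with $\ad_N$ alone does not force the filtration'' is true only for the strict inclusion. Equivariance already forces $R(A_d)\subset\bigoplus_{r\ge d}A_r$, and the strict inclusion then follows from $R^2=[R_N,R]$. Let $W_k$ be the $(2k+1)$-dimensional irreducible summand of $M_n(F)$ under your $\mathfrak{sl}_2$-triple, and let $v_k\in A_{-k}$ be its lowest weight vector. Then $R(v_k)\in\ker(\ad_N)^{2k+1}$. This kernel is spanned by the $W_m$ with $m\le k$, together with the top $2k+1$ weight spaces of each $W_m$ with $m>k$, which sit in degrees $\ge m-2k\ge 1-k$. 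Its only piece of degree $\le -k$ is $Fv_k$, so $R(v_k)=c_kv_k+w_k$ with $w_k\in\bigoplus_{r\ge 1-k}A_r$. Applying $\ad_N^j$ shows that $R$ preserves the filtration and that its degree-zero component $R_0$ acts on $W_k$ as the scalar $c_k$. The degree-zero part of $R^2=[R_N,R]$ gives $R_0^2=0$ (nilpotency of $R$ would also do). Since $R_0$ is semisimple, all $c_k=0$. Without this step your proposal proves only the first half of the lemma.
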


To prove the main result, we will apply the following facts for $\lambda \neq 0$.
By $m_R$ we mean the minimal polynomial of~$R$.

\begin{lemma}\label{lem:nz-known}
Let $R$ be a Rota---Baxter operator of weight~1 on $M_n(F)$ whose minimal polynomial
has degree $2n-1$. Then up to the action of~$\phi$
and up to conjugation with automorphism, one may assume

a) \cite[Corollary\,1,\,proof of Theorem\,4]{GubarevSpectrum} $m_R(X) = X^n(X+1)^{n-1}$.

b) \cite[Theorem\,4.17]{GubarevUnital}, \cite[Lemma\,3.4]{GoncharovGubarevM3} $R(1) = \diag\{-s,-s+1,\ldots,0,1,\ldots,r-1,r\}$
for some natural $r,s$ such that $r+s+1 = n$ and 
every space $V_d = \Span\{e_{ij} \mid i-j = d\}$ is $R$-invariant.

c) \cite[Lemma\,8b]{GubarevSpectrum} $M_n(F) = \ker R^n \oplus \ker(R+\id)^{n-1}$ is a direct vector space sum of two subalgebras.
\end{lemma}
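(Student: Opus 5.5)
The statement collects three facts from the cited works, so the plan is to assemble them in a fixed order. The point to watch is that the normalizations made in a), b) and c) are compatible: choosing a representative under $\phi$ and then under conjugation must not undo an earlier choice.

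\emph{Part a).} By Theorem~\ref{theo:rb-index}, for every Rota---Baxter operator $R$ of weight~1 on $M_n(F)$ there is some $0\le k\le 2n-1$ with $R^k(R+\id)^{2n-1-k}=0$. Hence $m_R$ divides $X^k(X+1)^{2n-1-k}$. Since $\deg m_R=2n-1$, we get equality: $m_R(X)=X^k(X+1)^{2n-1-k}$. The cited argument (\cite[Corollary\,1, proof of Theorem\,4]{GubarevSpectrum}) shows that the exponents are balanced, i.e.\ $k\in\{n-1,n\}$. Now $\phi(R)=-R-\id$ has minimal polynomial $m_R(-X-1)$, so $\phi$ interchanges the roles of the roots $0$ and $-1$. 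Applying $\phi$ if necessary, we reach $k=n$. Conjugation by an automorphism does not change $m_R$, so from this point on we use only conjugations and a) is preserved.

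\emph{Part b).} With $m_R=X^n(X+1)^{n-1}$ fixed, invoke \cite[Theorem\,4.17]{GubarevUnital} and \cite[Lemma\,3.4]{GoncharovGubarevM3}. These give that $R(1)$ is diagonalizable with consecutive integer eigenvalues $-s,\dots,r$, each of multiplicity one, where $r+s+1=n$. Conjugating by a suitable permutation and diagonal automorphism, we may take $R(1)=\diag\{-s,\dots,r\}$. The invariance of $V_d$ should then follow from the Rota---Baxter identity with $x=1$:
\[
R(1)R(y)=R\bigl(R(1)y+R(y)+y\bigr),
\]
together with the mirror identity obtained from $R(y)R(1)$. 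Subtracting the two gives $[R(1),R(y)]=R([R(1),y])$. Thus $R$ commutes with $\ad R(1)$, and therefore preserves its eigenspaces. For $D=\diag\{-s,\dots,r\}$ these eigenspaces are exactly the spaces $V_d$, because $[D,e_{ij}]=(i-j)e_{ij}$. This is the step I would write out in detail, since it is where the exact shape of $R(1)$ is used.

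\emph{Part c).} By primary decomposition with respect to $m_R=X^n(X+1)^{n-1}$, we have $M_n(F)=\ker R^n\oplus\ker(R+\id)^{n-1}$ as vector spaces. It remains to show that each summand is a subalgebra. I would use the standard identity for a weight-1 Rota---Baxter operator: $(R+\alpha\id)^N x\cdot(R+\beta\id)^N y$ lies in the image of a suitable combination of powers. Concretely, one shows by induction that the generalized eigenspace for eigenvalue $\alpha$ (here $\alpha=0$ or $\alpha=-1$) is closed under multiplication, as in \cite[Lemma\,8b]{GubarevSpectrum}. This inductive closure argument is the main obstacle; I would cite it, after checking that the earlier normalizations (choice of $\phi$, conjugation) do not affect it, which holds because both summands are intrinsic to $R$ and conjugation preserves products.
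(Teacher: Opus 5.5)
Your proposal is correct and takes the paper's approach: the paper gives no argument beyond citing the same three sources. Your added checks are all sound, namely the $\phi$-swap of the exponents of $m_R$, the $V_d$-invariance from $[R(1),R(y)]=R([R(1),y])$ (Lemma~\ref{lem:hom-known}a), and primary decomposition for c); the one slip is cosmetic, since bringing a diagonalizable $R(1)$ to $\diag\{-s,\ldots,r\}$ needs conjugation by an eigenbasis matrix rather than only permutation and diagonal matrices, which is harmless because automorphisms fix $1$.
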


By $D_n(F)$ we denote the subalgebra of diagonal matrices in~$M_n(F)$.

\begin{lemma}[{\cite{GubarevFields}\label{lem:diag-known}}]
Let $R$ be a Rota---Baxter operator of weight~1 on $M_n(F)$ such that $D_n(F)$ is $R$-invariant,
$(R|_{D_n(F)})^n = 0$, and $(R|_{D_n(F)})^{n-1}\neq0$.
Then we can conjugate~$R$ with an automorphism~$\psi$ such that 
$D_n(F)$ is $R^{(\psi)}$-invariant and $P = (R^{(\psi)})|_{D_n(F)}$ acts as follows:
$P(e_{ii}) = e_{i+1,i+1} + \ldots + e_{nn}$ for $1\leq i<n$ and $P(e_{nn}) = 0$.
\end{lemma}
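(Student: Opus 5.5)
The statement to prove is Lemma~\ref{lem:diag-known}. The approach is to treat $P=R|_{D_n(F)}$ as a Rota---Baxter operator of weight~1 on the commutative algebra $D_n(F)\cong F^n$. Such operators are governed by the standard structure theory, and since the lemma is cited from~\cite{GubarevFields} I would essentially reconstruct that argument.

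\textbf{Step 1: structure of the kernel and the image.} Since $P$ is nilpotent, $P+\id$ is invertible, so
\[
\Imm(P+\id)=D_n(F).
\]
Also $\Imm P$ is a subalgebra of $D_n(F)$, hence spanned by sums of orthogonal idempotents. Because $P^{n-1}\neq0$ and $P^n=0$, $P$ is a single nilpotent Jordan block of size~$n$. Consequently
\[
\dim\ker P=1,\qquad \dim\Imm P=n-1.
\]
From the weight~1 identity one checks that $\ker P$ is an ideal of $\Imm(P+\id)=D_n(F)$ in the appropriate sense. More directly, $P(x)P(y)=P(P(x)y+xP(y)+xy)$ shows that $\ker P$ is closed under multiplication by $\Imm P$. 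A one-dimensional subspace of $D_n(F)$ stable in this way is spanned by an idempotent-like element. After permuting coordinates, which is conjugation by a permutation automorphism, I aim to show $\ker P=Fe_{nn}$, while $\Imm P$ is the subalgebra of codimension one containing $e_{nn}$.

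\textbf{Step 2: induction via the flag.} The chain
\[
\Imm P^{k}\supset \Imm P^{k+1}
\]
consists of subalgebras, because powers of an RB-operator again produce images that are subalgebras: $\Imm P^k$ is the image of $P$ restricted to the subalgebra $\Imm P^{k-1}$, which is $P$-invariant, so $P$ restricts to an RB-operator on it. Each step drops the dimension by one. Hence $\Imm P^{k}$ is a subalgebra of dimension $n-k$ of $F^n$ containing no unit issues, and I want it to equal $\Span\{f_{k+1},\dots,f_n\}$, with the $f_j$ orthogonal idempotents after a permutation. The claim to establish is the formula
\[
\Imm P^{k}=\Span\{e_{k+1,k+1},\dots,e_{nn}\}.
\]
I would prove it inductively by restricting $P$ to $\Imm P$, which is isomorphic to $F^{n-1}$ when it is unital with unit $\sum_{i\ge2}e_{ii}$, and applying induction. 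The main obstacle is here: a subalgebra of $F^n$ of dimension $n-1$ need not be a coordinate subalgebra. It could be spanned by idempotents where two coordinates are glued. I would rule this out using the RB identity together with $P(1)$. Writing $P(1)=\sum c_i e_{ii}$, the identity with $x=y=1$ gives
\[
P(1)^2=2P(P(1))+P(1),
\]
and nilpotency plus the identity applied to $e_{ii}$ should force $P$ to be upper triangular with respect to a suitable ordering of the idempotents.

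\textbf{Step 3: determining the coefficients.} Once $P$ is triangular with respect to the ordering, $P(e_{ii})\in\Span\{e_{jj}: j>i\}$, and the zero diagonal follows from nilpotency. The RB identity on pairs $e_{ii},e_{jj}$ with $i<j$ gives
\[
P(e_{ii})P(e_{jj})=P\bigl(P(e_{ii})e_{jj}+e_{ii}P(e_{jj})\bigr)=P\bigl(a_{ij}e_{jj}\bigr),
\]
where $a_{ij}$ is the $j$-th coefficient of $P(e_{ii})$. On $i=j$ it gives
\[
P(e_{ii})^2=2P(e_{ii}P(e_{ii}))+P(e_{ii})=P(e_{ii}),
\]
using $e_{ii}P(e_{ii})=0$. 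So every $P(e_{ii})$ is an idempotent, namely a sum of some $e_{jj}$ with $j>i$. Combining the pairwise relations with $P^{n-1}\neq0$, which forces a full chain, yields
\[
P(e_{ii})=\sum_{j>i}e_{jj}.
\]
If the ordering requires it, a final permutation automorphism of $M_n(F)$ fixes the indices. Such an automorphism preserves $D_n(F)$, so $D_n(F)$ remains $R^{(\psi)}$-invariant.
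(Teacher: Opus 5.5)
\textbf{Verdict: there is a genuine gap, in Step~2.} It is exactly the point you label ``the main obstacle.''

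Your whole argument rests on the claim that every $\Imm P^k$ is the coordinate subalgebra $\Span\{e_{k+1,k+1},\ldots,e_{nn}\}$. Equivalently, $P$ is strictly triangular for some ordering of the $e_{ii}$. For this you only say that the identity for $P(1)$ together with nilpotency ``should force'' it. Nothing you have written excludes the glued case. Restricting $P$ to $\Imm P\cong F^{n-1}$ and inducting only yields orthogonal idempotents $f_1,\ldots,f_{n-1}$ spanning $\Imm P$ with $P(f_i)=\sum_{j>i}f_j$. Whether some $f_m$ equals $e_{pp}+e_{qq}$ is precisely the open question.

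The glued case can be excluded by hand. The identity with $x=y=e_{pp}$ and with $x=y=e_{qq}$, together with $P(e_{pp})+P(e_{qq})=P(f_m)$, forces $P(e_{pp}),P(e_{qq})\in\Span\{f_j : j>m\}$. Then $\Imm P=P(\Imm P)+FP(e_{pp})$ misses $f_1$, a contradiction. After that you would still have to compute $P$ on the one coordinate not in $\Imm P$. None of this is in the proposal.

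Step~3 is fine once triangularity is granted. There is also a small slip in Step~1. For $y\in\ker P$ the identity gives $P\bigl((P(x)+x)y\bigr)=0$, i.e.\ $\Imm(P+\id)\cdot\ker P\subseteq\ker P$, not closure under $\Imm P$. Your conclusion that $\ker P$ is an ideal of $D_n(F)$ is nevertheless correct.

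\textbf{How to close the gap.} Induct on a quotient rather than on a subalgebra: ideals of $D_n(F)$, unlike subalgebras, are always coordinate, so no gluing can occur.
\begin{itemize}
\item After a permutation, $\ker P=Fe_{nn}$.
\item Since $\ker P=\Imm P^{n-1}$ for a single Jordan block, $P$ induces a weight-$1$ RB-operator on $D_n(F)/Fe_{nn}\cong D_{n-1}(F)$ that is nilpotent of index exactly $n-1$.
\item By induction, after permuting the first $n-1$ indices, $P(e_{ii})=\sum_{i<j<n}e_{jj}+c_ie_{nn}$ for $i<n$.
\item Since $P(e_{ii})e_{ii}=0$, the identity gives $P(e_{ii})^2=P(e_{ii})$, so $c_i\in\{0,1\}$.
\item For $i<j<n$ the identity gives $P(e_{ii})P(e_{jj})=P(e_{jj})$, so $c_ic_j=c_j$.
\item $c_{n-1}\neq0$ because $\dim\ker P=1$. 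Hence all $c_i=1$.
\end{itemize}

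Alternatively, the map $\Phi_0=P(P+\id)^{-1}$, the same one the paper uses in its nonzero-weight section, is an algebra endomorphism of $D_n(F)$ that is nilpotent of index $n$. Such an endomorphism is induced by a partial self-map of the index set with no cycles. Index $n$ forces it to be the shift $e_{ii}\mapsto e_{i+1,i+1}$ up to permutation, and then $P=\sum_{m\geq1}\Phi_0^m$.
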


Given an algebra~$A$, we denote the operator of the left multiplication by a fixed element~$a\in A$ as $L_a$.
By $\ad_a$ we denote the operator $\ad_a\colon x\to [a,x] = ax - xa$.

\begin{lemma}[{\cite{GubarevUnital}}]\label{lem:hom-known}
Let $R$ be a Rota---Baxter operator of weight~1 on an algebra~$A$ and put $a = R(1)$. Then

a) $[L_a,R] = R(R+\id)$ and $[a,R(x)] = R([a,x])$,

b) Suppose that $A = \ker R^n \oplus \ker(R+\id)^{n-1}$ is a direct vector space sum of subalgebras.
Then $\Phi_0 = R(R+\id)^{-1}$ is an endomorphism of $\ker R^n$ and
$\Phi_1 = (R+\id)R^{-1}$ is an endomorphism of $\ker(R+\id)^{n-1}$.
Moreover, $\Phi_0$ ($\Phi_1$) commutes on $\ker R^n$ ($\ker (R+\id)^{n-1}$) with $\ad_a$ and satisfies
$[L_a,\Phi_0] = \Phi_0$ ($[L_a,\Phi_1] = -\Phi_1$).
The subspaces $\ker R^n,\ker(R+\id)^{n-1}$ are $L_a$-invariant.
\end{lemma}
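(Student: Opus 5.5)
The plan is to derive everything from the Rota---Baxter identity~\eqref{RB} with $\lambda=1$, specialised to $x=1$ or $y=1$, together with elementary operator manipulations on the two invariant summands.

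\emph{Part a).} Put $x=1$ in~\eqref{RB}. This gives $aR(y)=R(ay)+R^2(y)+R(y)$, that is, $L_aR-RL_a=R^2+R=R(R+\id)$. Putting $y=1$ instead gives $R(x)a=R(xa)+R^2(x)+R(x)$. Subtracting the two identities, the terms $R^2+R$ cancel, and we get $aR(x)-R(x)a=R(ax)-R(xa)$, i.e.\ $[a,R(x)]=R([a,x])$. For later use, I rewrite the first identity in two forms:
$$RL_a=(L_a-R-\id)R,\qquad (R+\id)L_a=(L_a-R)(R+\id).$$

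\emph{Part b), invariance and well-definedness.} Both $\ker R^n$ and $\ker(R+\id)^{n-1}$ are $R$-invariant. On $\ker R^n$ the operator $R$ is nilpotent, so $R+\id$ is invertible there and $\Phi_0$ is defined. Symmetrically, $R$ is invertible on $\ker(R+\id)^{n-1}$, so $\Phi_1$ is defined.

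By induction from the two forms above, one gets
$$R^kL_a=(L_a-k(R+\id))R^k,\qquad (R+\id)^kL_a=(L_a-kR)(R+\id)^k.$$
Hence both kernels are $L_a$-invariant. By part a), $R$ commutes with $\ad_a$, so both kernels are $\ad_a$-invariant. The operators $R$, $R+\id$ and their inverses on the summands then commute with $\ad_a$, and therefore so do $\Phi_0$ and $\Phi_1$.

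\emph{Part b), the commutator with $L_a$.} From the displayed identities, $L_a(R+\id)^{-1}=(R+\id)^{-1}(L_a-R)$ on $\ker R^n$. Therefore
$$L_a\Phi_0=R(L_a+R+\id)(R+\id)^{-1}=R\bigl((R+\id)^{-1}(L_a-R)+\id\bigr)=\Phi_0L_a+\Phi_0.$$
The identity $[L_a,\Phi_1]=-\Phi_1$ follows from the same computation after applying $\phi$. The operator $-R-\id$ is again a weight-1 RB-operator, and the element $-R(1)-1$ acts on $A$ by left multiplication as $-L_a-\id$. Under $\phi$ the two kernels swap and $\Phi_0$ and $\Phi_1$ exchange roles, so the sign flips.

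\emph{Part b), multiplicativity (the main step).} I expect this to be the main obstacle. Take $x,y\in\ker R^n$ and write $x=(R+\id)u$, $y=(R+\id)v$ with $u,v\in\ker R^n$. Set $w=R(u)v+uR(v)+uv$. Then~\eqref{RB} gives $R(u)R(v)=R(w)$, and expanding the product gives $xy=(R+\id)(w)$.

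The difficulty is that $w$ need not a priori lie in $\ker R^n$. Decompose $w=w_0+w_1$ along the direct sum. Since $\ker R^n$ is a subalgebra, $xy\in\ker R^n$, so the $\ker(R+\id)^{n-1}$-component $(R+\id)w_1$ of $xy$ vanishes, hence $R(w_1)=-w_1$. Then $R(u)R(v)=R(w_0)-w_1$. The left-hand side lies in $\ker R^n$, being a product in that subalgebra, and $R(w_0)\in\ker R^n$ as well. Since the sum is direct, this forces $w_1=0$.

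Consequently $\Phi_0(xy)=R(w)=R(u)R(v)=\Phi_0(x)\Phi_0(y)$, so $\Phi_0$ is an endomorphism of $\ker R^n$. Multiplicativity of $\Phi_1$ follows by the same argument with $R$ replaced by $\phi(R)$, or by repeating it with the roles of $R$ and $R+\id$ swapped.
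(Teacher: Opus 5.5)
Your proof is correct. For the parts the paper actually proves, namely the $L_a$-invariance of both kernels and the relations with $\ad_a$ and $L_a$, you follow essentially the same route:
\begin{itemize}
\item Your inductive formulas $R^kL_a=(L_a-k(R+\id))R^k$ and $(R+\id)^kL_a=(L_a-kR)(R+\id)^k$ are the paper's $[L_a,R^k]=k(R^{k+1}+R^k)$ and $[L_a,S^k]=k(S^{k+1}-S^k)$.
\item Your identity $L_a(R+\id)^{-1}=(R+\id)^{-1}(L_a-R)$ is the paper's $[L,T^{-1}]=-T^{-1}[L,T]T^{-1}$ with $T=R+\id$.
\end{itemize}
The paper takes part a) and the multiplicativity of $\Phi_0,\Phi_1$ from earlier work without proof. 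You instead derive a) by substituting $x=1$ and $y=1$ into the Rota--Baxter identity and prove multiplicativity directly, so your version is self-contained. You also get $[L_a,\Phi_1]=-\Phi_1$ via the symmetry $\phi$ rather than by a second direct computation.

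There are two small remarks.

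First, the step you flag as the main obstacle is not one. The elements $u,v,R(u),R(v)$ all lie in the $R$-invariant subalgebra $\ker R^n$, so $w=R(u)v+uR(v)+uv\in\ker R^n$ immediately. The decomposition $w=w_0+w_1$ is therefore unnecessary, and the direct-sum hypothesis is not needed for multiplicativity at all.

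Second, under $\phi(R)=-R-\id$ the kernels do not literally swap. One has $\ker\phi(R)^n=\ker(R+\id)^n$ and $\ker(\phi(R)+\id)^{n-1}=\ker R^{n-1}$, and in the application the latter is strictly smaller than $\ker R^n$. This is harmless, because your arguments for $\Phi_0$ use only that the subspace is $R$- and $L_a$-invariant (and a subalgebra, for multiplicativity) with $R+\id$ invertible on it. You can therefore run them for $\phi(R)$ on $\ker\phi(R)^{n-1}=\ker(R+\id)^{n-1}$, with $L_{\phi(R)(1)}=-L_a-\id$. It is worth saying this explicitly.
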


\begin{proof}
Actually, we only need to prove the last two parts of b.
The equalities involving $\Phi_0,\Phi_1$ follow from a) and $[L,T^{-1}] = -T^{-1}[L,T]T^{-1}$.
Finally, $[L_a,R^k] = k(R^{k+1}+R^k)$ and, for $S = R+\id$,
$[L_a,S^k] = k(S^{k+1}-S^k)$, which gives the $L_a$-invariance of $\ker R^n$ and $\ker(R+\id)^{n-1}$.
Indeed, take $x\in \ker R^n$, then $R^n(L_a(x)) = L_a R^n(x) - nR^n(R+\id)(x) = 0$, hence
$L_a(x) \in \ker R^n$. Analogously, we have $L_a$-invariance of $\ker(R+\id)^{n-1}$.
\end{proof}

\section{Weight zero}

The assertion for $n\leq2$ follows from the classification~\cite{Panasenko2}.
Hence throughout this section we assume $n\geq3$.

We first determine the image and the kernel of $R$.
Define
$$
B = \Span\{ e_{ij} \mid 1\leq i\leq n-1,\,1\leq j\leq n\}, \quad
K = \Span \{e_{in}\mid 1\leq i\leq n\}.
$$

\begin{lemma}\label{lem:zero-image}
Let $R$ satisfy the hypotheses of Theorem~\ref{thm:main}a and $R(1) = N$. 
Then there exists an (anti)automorphism~$\psi$ of $M_n(F)$ such that
$\Imm R^{(\psi)} = B$ and
$\ker R^{(\psi)} = K$.
\end{lemma}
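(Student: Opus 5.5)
The plan is to first prove $\dim\Imm R = n^2-n$ (equivalently $\dim\ker R = n$), then apply Meshulam's theorem to pin down the image, and finally use the bimodule property of $\ker R$ to pin down the kernel.

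\emph{Step 1: the rank of $R$.} By Theorem~\ref{theo:rb-index}, $R^{2n-1}=0$, and the hypothesis gives $R^{2n-2}\neq 0$. So Lemma~\ref{lem:zero-known} applies. Keeping $R(1)=N$, it gives $R(A_d)\subset\bigoplus_{r>d}A_r$. Let $\bar R_d\colon A_d\to A_{d+1}$ be the induced graded map, i.e.\ the $A_{d+1}$-component of $R|_{A_d}$. We have $\dim A_d = n-|d|$, and $d$ ranges over $-(n-1),\dots,n-1$. The target is to show that $\bar R_d$ is injective for $d<0$ and surjective for $d\geq 0$. This gives
$$
\operatorname{rank} R \geq \sum_{d<0}\dim A_d + \sum_{d\geq 0}\dim A_{d+1} = \tfrac{n(n-1)}{2}+\tfrac{n(n-1)}{2} = n^2-n .
$$
Together with Lemma~\ref{lem:upper-bound}, this yields $\dim\Imm R = n^2-n$. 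The tool for the graded maps is the identity obtained from \eqref{RB} with $y=1$ and $x=1$:
$$
R\bigl([x,N]\bigr) = [R(x),N].
$$
So $\bar R$ intertwines $\ad_N$, which raises the degree by one; $\ad_N$ is injective on $A_d$ for $d<0$ and surjective onto $A_{d+1}$ for $d\geq0$ in the sense of $\mathfrak{sl}_2$-weight theory. The other input is that the grading length forces the Jordan chain realizing $R^{2n-2}\neq0$ to pass through every degree, so each $\bar R_d$ is nonzero. Combining $\bar R_d\ne0$ with the intertwining (a lowest/highest weight argument for the $\mathfrak{sl}_2$-type action of $N$) should give the required injectivity/surjectivity. \textbf{This step is the main obstacle}. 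If the $\mathfrak{sl}_2$ argument fails, I would instead bound $\dim\ker R\leq n$ directly: show $\ker R\cap A_{\geq d}$ meets each graded piece in dimension at most one, using that $\ker R$ is an $\Imm R$-bimodule containing no nonzero element of low degree.

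\emph{Step 2: the image.} Now $\Imm R$ is an $(n^2-n)$-dimensional space of singular matrices, so Lemma~\ref{lem:meshulam} applies. Either all matrices in $\Imm R$ have a common kernel vector, or all their images lie in a common hyperplane. Applying transpose exchanges the two cases. We then conjugate by the antidiagonal permutation matrix, which sends $N^T$ back to $N$ and preserves the property $R(A_d)\subset\bigoplus_{r>d}A_r$. After this we may assume the hyperplane case: $\Imm R\subseteq\{X : XF^n\subseteq H\}$. The right-hand side has dimension $n^2-n$, so equality holds. Since $N\in\Imm R$ and $NF^n=\Span\{e_1,\dots,e_{n-1}\}$ is a hyperplane, $H$ is forced to be this span, and $\Imm R = B$ with no further conjugation.

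\emph{Step 3: the kernel.} Now $\ker R$ is an $n$-dimensional $B$-bimodule. Take $0\neq x\in\ker R$ and act on the left by $e_{ij}$ with $i<n$ and on the right by $e_{ij}$ with $i<n$. If $x$ has a nonzero entry outside the last column, these actions produce every $e_{kl}$ with $k<n$ (and hence too much: dimension greater than $n$ once $n\ge3$). So every element of $\ker R$ is supported in the last column. Left multiplication by $B$ then moves the entries of that column freely within the rows $1,\dots,n-1$. Moreover, $\ker R\cap\Imm R\neq\ker R$ would be needed to escape $B$, since $\dim\ker R=n>n-1$. Together these force $\ker R\supseteq\Span\{e_{in}\}$, and by dimension $\ker R = K$.
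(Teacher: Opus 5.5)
\textbf{Gap in Step 1.} Your Steps 2 and 3 are essentially the paper's argument. Transpose followed by conjugation with the antidiagonal permutation is exactly the antiautomorphism $e_{ij}\mapsto e_{n+1-j,n+1-i}$ used there. In Step 3, the inclusion $\ker R\subseteq K$ together with $\dim\ker R=n=\dim K$ already finishes; the sentences about ``escaping $B$'' are not needed. The genuine gap is Step 1, and the mechanism you propose for it cannot work. Commuting with $\ad_N$ together with $\bar R_d\neq0$ for all $d$ (even together with $\bar R^{2n-2}\neq0$) does not force injectivity for $d<0$ or surjectivity for $d\geq0$. Let $V\subset M_n(F)$ be the irreducible summand of dimension $2n-1$, under the principal $\mathfrak{sl}_2$ containing $N$, that contains $e_{n1}$. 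Let $\pi$ be the equivariant projection onto $V$, and put $T=\ad_N\circ\pi$. Then $T$ is homogeneous of degree one, commutes with $\ad_N$, every $T_d$ with $-(n-1)\leq d\leq n-2$ is nonzero, and $T^{2n-2}(e_{n1})\neq0$. Yet $\operatorname{rank}T=2n-2<n^2-n$ for $n\geq3$. So no lowest/highest weight argument based only on $\ad_N$ can give the rank bound. Some multiplicative information about $R$ must enter, and your fallback via $\ker R$ is too vague to supply it. You would also need to justify $\operatorname{rank}R\geq\operatorname{rank}\bar R$, which does follow from a filtration argument.

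\textbf{The missing idea.} $\Imm R$ is a subalgebra containing $N=R(1)$ and $R(e_{n1})$. Hence it is stable under left and right multiplication by $N$ separately, which is much stronger than $\ad_N$-stability. The paper identifies $M_n(F)$ with $C=F[s,t]/(s^n,t^n)$ via $\Theta(s^at^b)=N^ae_{n1}N^b=e_{n-a,b+1}$, making $M_n(F)$ a free cyclic $C$-module generated by $e_{n1}$. Write $R(e_{n1})=\Theta(g)$. The filtration from Lemma~\ref{lem:zero-known} gives that $g$ has zero constant term, and your observation $\bar R_{-(n-1)}\neq0$ says precisely that its linear part $\ell=\alpha s+\beta t$ is nonzero. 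Then $\Imm R\supseteq\Theta(gC)$. Comparing lowest-degree forms gives $\dim gC\geq\dim\ell C=n^2-n$, because $C/\ell C$ is isomorphic to $F[s]/(s^n)$ or $F[t]/(t^n)$. Together with Lemma~\ref{lem:upper-bound} this yields $\dim\Imm R=n^2-n$, after which your Steps 2 and 3 go through.
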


\begin{proof}
Since every application of $R$ strictly raises the diagonal degree and
$R^{2n-2}\neq0$, necessarily $R^{2n-2}(e_{n1})\neq0$ and the component of $R(e_{n1})$ in $A_{-(n-2)}$ is nonzero.

Put $C=F[s,t]/(s^n,t^n)$ and define the $F$-linear map $\Theta\colon C\to M_n(F)$ by
$$
\Theta(s^ a t^b) = N^a e_{n1} N^b, \quad 0\leq a,b<n.
$$  
Since $N^a e_{n1} N^b = e_{n-a,b+1}$, $\Theta$ is an isomorphism of vector spaces. 
We also have $\Theta(s^a h t^b) = N^a\Theta(h)N^b$ for every $h\in C$ and for all $a,b\geq0$.
Define $g = \Theta^{-1}(R(e_{n1}))$ and so $R(e_{n1}) = \Theta(g)$. 
By Lemma~\ref{lem:zero-known}, $g$~has zero constant term, while the first paragraph shows that its linear part $\ell$~is nonzero.

Since $N,R(e_{n1})\in\Imm R$ and $\Imm R$ is a subalgebra, $\Theta(gC)\subset\Imm R$. 

Write $g = \ell + g_{\geq2}$, where $\ell = \alpha s + \beta t\neq0$. 
Then $\dim gC = \dim\ell C = n^2 - n$.
$$
n^2 - n
 = \dim \ell C
 \leq \dim gC
 = \dim \Theta(gC)
 \leq \dim\Imm R.
$$

Applying Lemma~\ref{lem:upper-bound}, we get $\dim\Imm R = n^2-n$. 
By Lemma~\ref{lem:meshulam}, either all matrices in $\Imm R$ have a common nonzero kernel
vector or all their images lie in a fixed hyperplane. 
Since $N = R(1)\in\Imm R$ and $Ne_1 = 0$, $Ne_i = e_{i-1}$ for $2\leq i\leq n$, the common kernel line in the first
case is $Fe_1$. In the second case, the common image hyperplane contains
$Ne_2,\ldots,Ne_n$, hence it contains $\Span\{e_1,\ldots,e_{n-1}\}$; since both spaces have dimension $n-1$, they are equal.
The antiautomorphism $\psi\colon e_{ij}\to e_{n+1-j,n+1-i}$ fixes~$N$ and interchanges the two cases.
We may therefore assume $\Imm R = B$.

Since $\dim B = n^2-n$, one has $\dim\ker R = n$.
Recall that $\Imm R \cdot \ker R,\ker R\cdot \Imm R\subseteq \ker R$.
If some $z\in\ker R$ has a~nonzero entry $z_{rk}$ with $k<n$, then $e_{ir}ze_{kl} = z_{rk}e_{il}$ imply $B\subset\ker R$, a contradiction for $n\geq3$.
Thus $\ker R = K$.
\end{proof}

Below, we denote the identity matrix $e_{11}+\ldots+e_{nn}$ as $E$.
Define
$L = \sum_{i=1}^{n-1}i e_{i+1,i}$.

Since $BK\subset K$ and $KB = 0$, the quotient $M_n(F)/K$ is naturally a~$B$-bimodule:
$b(x + K) = bx + K$ and $(x + K)b = xb + K$.
A derivation from $B$ to this bimodule is an $F$-linear map
$\delta\colon B\to M_n(F)/K$ satisfying
$\delta(bc) = b\delta(c) + \delta(b)c$ for all $b,c\in B$.

\begin{lemma}\label{lem:inner-derivation}
Every derivation $\delta\colon B\to M_n(F)/K$ is inner.
\end{lemma}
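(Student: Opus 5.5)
The plan is to reduce to the separable subalgebra $S=fM_n(F)f\cong M_{n-1}(F)$ of $B$, where $f=e_{11}+\ldots+e_{n-1,n-1}$. First I make $\delta$ vanish on $S$ by subtracting an inner derivation. Then I show that a derivation vanishing on $S$ vanishes on all of $B$.

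Identify $Q=M_n(F)/K$ with $n\times(n-1)$ matrices, i.e. the first $n-1$ columns. Then $B$ acts on $Q$ from the left by multiplication, and from the right by multiplication followed by deleting the last column. Note that $B=S\oplus T$ with $T=\Span\{e_{in}\mid i<n\}$, and that $f$ is a left identity of $B$.

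\textbf{Step 1: making $\delta|_S$ inner.} Apply $\delta$ to $f^2=f$ to get $\delta(f)=f\delta(f)+\delta(f)f$. This gives the Peirce relations $f\delta(f)f=0$ and $(1-f)\delta(f)(1-f)=0$ in $Q$. Hence $\delta(f)=[[f,\delta(f)],f]$ for any lift of $\delta(f)$ to $M_n(F)$, so replacing $\delta$ by $\delta-\ad_{[f,\delta(f)]}$ I may assume $\delta(f)=0$.

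With $\delta(f)=0$, each $\delta(x)$ with $x\in S$ satisfies $\delta(x)=\delta(fxf)=f\delta(x)f$. So $\delta|_S$ is a derivation of the unital algebra $S\cong M_{n-1}(F)$ into the unital bimodule $fQf\cong M_{n-1}(F)$. Such derivations are inner: either quote separability of $M_{n-1}(F)$, or take the explicit element
$$
m=\sum_{k=1}^{n-1}\delta(e_{k1})e_{1k}\in fM_n(F)f .
$$
Checking $\delta(e_{ij})=[m,e_{ij}]$ on matrix units is routine, via $e_{ij}=e_{i1}e_{1j}$ and $\sum_k e_{k1}e_{1k}=f$. Subtracting $\ad_m$, I may assume $\delta|_S=0$.

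\textbf{Step 2: killing $\delta$ on $T$.} For $i<n$ write $e_{in}=e_{i1}e_{1n}$, so $\delta(e_{in})=e_{i1}\delta(e_{1n})$, and it suffices to show $\delta(e_{1n})=0$ in $Q$.

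For every $k,l\le n-1$ we have $e_{1n}e_{kl}=0$. Applying $\delta$ and using $\delta(e_{kl})=0$ gives $\delta(e_{1n})e_{kl}=0$ in $Q$. Now $\delta(e_{1n})e_{kl}$ is the matrix whose $l$-th column equals the $k$-th column of $\delta(e_{1n})$, and $l<n$, so this column survives modulo $K$. Hence all of the first $n-1$ columns of $\delta(e_{1n})$ vanish, i.e. $\delta(e_{1n})\in K$, i.e. $\delta(e_{1n})=0$ in $Q$.

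Therefore the modified $\delta$ is zero on $S\oplus T=B$. Consequently the original $\delta$ equals $\ad_{m'}$ modulo $K$ for $m'=[f,\delta(f)]+m$.

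The step I expect to need most care is Step~1. The bimodule $Q$ is not unital over $B$ (as $f$ is only a one-sided identity), so the Peirce bookkeeping for $\delta(f)$ must be done honestly in $Q$. In particular I have to check that the corner components $(1-f)Qf$ and $fQ(1-f)$ are correctly interpreted, given that right multiplication discards the last column. Once $\delta(f)=0$ is arranged, everything reduces to the classical inner-derivation statement for $M_{n-1}(F)$ together with the short annihilator argument of Step~2.
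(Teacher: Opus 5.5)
Your strategy is sound and close to the paper's, but Step~1 as written has a sign error that makes the normalization fail.

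\textbf{The sign error.} If $fxf=0=(1-f)x(1-f)$, then $[f,x]=fx(1-f)-(1-f)xf$, and hence $[[f,x],f]=-x$, not $x$. For example, $[[f,e_{n1}],f]=-e_{n1}$. In block form, a lift of $\delta(f)$ is $X=\begin{pmatrix}0&p\\ q&r\end{pmatrix}$, and $[[f,X],f]=\begin{pmatrix}0&-p\\ -q&0\end{pmatrix}\equiv -X \pmod K$. You later use the convention $\ad_m(x)=[m,x]$, since your $m$ satisfies $\delta(e_{ij})=[m,e_{ij}]$. With that convention, $\delta-\ad_{[f,\delta(f)]}$ sends $f$ to $2\delta(f)$, not to $0$.

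\textbf{The fix.} The correct identity is $\delta(f)=[[\delta(f),f],f]=[f,[f,\delta(f)]]$. So you must subtract $\ad_{[\delta(f),f]}$ and take $m'=[\delta(f),f]+m$. This does not depend on the lift, because for $k\in K$ the map $\ad_k$ induces zero $B\to M_n(F)/K$ (as $kB=0$ and $BK\subset K$).

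The corner bookkeeping you were worried about is harmless. Every class in $Q$ has a representative with zero last column, so once $f\delta(f)f=0$ the class $\delta(f)$ is just a row vector $v$ sitting in the last row. After the repair the rest is correct: $\delta(S)\subset fQf\cong M_{n-1}(F)$, your explicit $m$ does implement $\delta|_S$, and Step~2 is valid.

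\textbf{Comparison with the paper.} The paper uses the same idempotent $e=\diag(E_{n-1},0)$ (your $f$). It uses the same consequence of $e^2=e$ (the upper-left block of $\delta(e)$ vanishes) and the same reduction to a derivation of $M_{n-1}(F)$. Instead of normalizing $\delta$ by successive inner derivations, it computes the general shape
\[
\delta(b)=\begin{pmatrix}AT-TA-uv&0\\ vA&0\end{pmatrix}+K
\]
and writes down $X=\begin{pmatrix}T&0\\ -v&0\end{pmatrix}$ directly. Your corrected $m'$ agrees with $-X$ up to terms in $K+Ff$, which induce the zero derivation.

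For the last column, the paper uses $xe=0$, giving $\delta(x)e=-x\delta(e)$. In your normalized setting ($\delta(f)=0$) this yields $\delta(x)f=0$, hence $\delta(x)=0$ in $Q$, for every $x\in T$ at once. That would shorten your Step~2, which goes through $e_{in}=e_{i1}e_{1n}$ and the products $e_{1n}e_{kl}=0$.

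Your version is slightly more conceptual and self-contained, since the explicit $m$ replaces quoting innerness for $M_{n-1}(F)$. The paper's is more compact and hands Lemma~\ref{lem:zero-derivation} an explicit implementing matrix.
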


\begin{proof}
Write $F^n=F^{n-1}\oplus F$. Relative to this decomposition, every $b\in B$ has the form
$b = \begin{pmatrix}
 A & u\\
 0 & 0
\end{pmatrix}$,
where $A\in M_{n-1}(F)$ and $u\in F^{n-1}$. 
Put $e=\diag(E_{n-1},0)$ and write
$\delta(e) = \begin{pmatrix}P&0\\ v&0\end{pmatrix} + K$.
Since $e^2 = e$, the derivation identity gives
$\delta(e) = e\delta(e) + \delta(e)e$, hence $P = 0$.
Now $b = eb$, so
$\delta(b) = e\delta(b) + \delta(e)b$.
Therefore 
$\delta(b) = \begin{pmatrix} *  & 0\\ vA & 0 \end{pmatrix} + K$.

For $u = 0$, write
$\delta\colon \begin{pmatrix}A&0\\ 0&0\end{pmatrix}
\to \begin{pmatrix}d(A)&0\\ vA&0\end{pmatrix} + K$.
The derivation identity for~$\delta$ shows that
$d(AC) = Ad(C) + d(A)C$ for all $A,C\in M_{n-1}(F)$; hence $d$~is a~derivation of $M_{n-1}(F)$.
It is known that all derivations of the matrix algebra are inner, thus $d(A) = AT - TA$ for some
$T\in M_{n-1}(F)$.

Now put
$x = \begin{pmatrix}0&u\\ 0&0\end{pmatrix}$.
Since $xe = 0$, we have
$0 = \delta(x)e + x\delta(e)$. Using
$\delta(e) = \begin{pmatrix}0&0\\ v&0\end{pmatrix} + K$,
comparison of the upper-left blocks gives the contribution $-uv$. Therefore
$$
\delta(b)
 = \begin{pmatrix}
AT-TA-uv & 0\\
vA       & 0
\end{pmatrix} + K.
$$
Define
$X = \begin{pmatrix}
 T & 0 \\
-v & 0 \end{pmatrix}$ and hence $\delta(b) = bX - Xb + K$.
\end{proof}

\begin{lemma}\label{lem:zero-derivation}
Every derivation $\delta\colon B\to M_n(F)/K$ satisfying $\delta(N) = E + K$ is conjugate, by an inner
automorphism commuting with $N$, to $\delta_0(b) = bL - Lb + K$.
\end{lemma}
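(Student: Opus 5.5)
By Lemma~\ref{lem:inner-derivation}, I would first write $\delta(b) = bX - Xb + K$ for some $X\in M_n(F)$. The matrix $X$ is defined only modulo the set of $Y$ with $bY - Yb\in K$ for all $b\in B$. A short computation with $b = e_{ij}$, $i\leq n-1$, shows that this set is $FE + K$. Indeed, testing against the matrix units $e_{ij}$ with $i,j<n$ forces the upper-left block to be scalar. Testing against $e_{in}$ then forces the last row to be $(0,\ldots,0,c)$ with $c$ equal to that scalar, while the last column is absorbed in $K$. So the problem reduces to normalizing $X$ modulo $FE+K$ under the condition
$$
[N,X] \equiv E \pmod K .
$$
The model solution is $X = L$, since $NL - LN$ equals $E$ up to the $(n,n)$ entry: $[N,L] = \diag(1,\ldots,1,-(n-1))$, which lies in $E + K$.

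Next I would describe all solutions. Writing $X = L + Y$, the condition becomes $[N,Y]\in K$. I expect this to force $Y$ to agree, modulo $K$, with a polynomial in $N$, though this must be checked. The map $\ad_N$ shifts the diagonals $A_d$. In the first $n-1$ columns it acts like a Jordan-type chain, so its kernel modulo $K$ should be spanned by the powers $N^k$ together with the column-$n$ entries. The precise statement I would prove is
$$
Y \in F[N] + K,
$$
that is, $Y \equiv \sum_{k\geq0} c_k N^k \pmod K$. I would verify this diagonal by diagonal, using the explicit formula $(NY-YN)_{ij} = y_{i+1,j} - y_{i,j-1}$ and requiring it to vanish for $j<n$. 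This recursion propagates entries along diagonals from the first column and the last row. The boundary conditions $y_{n+1,j}=0$ and $y_{i,0}=0$ should then kill the lower diagonals and make the upper ones constant. The term $c_0E$ is harmless, since $E$ lies in the ambiguity $FE+K$.

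Finally I would conjugate. Take the inner automorphism by $U = \exp(p(N))$, or any invertible element of $F[N]$, which commutes with $N$. Conjugating $\delta$ replaces $X$ by $U^{-1}XU$ modulo $FE+K$; I would need to check that $U$ preserves $B$ and $K$, which holds since $N$ is strictly upper triangular. Since $[N, p(N)] = 0$, we get
$$
U^{-1}LU = L + [L, p(N)] + \ldots .
$$
Using $[L,N] \equiv -E \pmod K$, one sees that $[L,N^{k}] \equiv -kN^{k-1}$ modulo $K$. So by choosing $p(N) = \sum a_k N^k$ suitably, degree by degree, I can cancel the terms $c_kN^{k-1}$ for $k\geq 2$. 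Here I would use characteristic zero to divide by $k$, in a triangular induction on the power of $N$. Then $X \equiv L$, and $\delta$ becomes $\delta_0$.

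The main obstacle I expect is the bookkeeping of $K$ throughout. Conjugating by $U$ must respect the bimodule $M_n(F)/K$, and the congruence $[L,N^k]\equiv -kN^{k-1}$ has error terms in column $n$ that must stay inside $K$. I would first verify that $K$ is stable under left and right multiplication by $F[N]$, and that the exponential series terminates because $N$ is nilpotent.
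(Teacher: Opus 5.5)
Your proposal is correct and follows essentially the paper's route. You reduce to $X=L+Y$ with $[N,Y]\in K$, show $Y\equiv f(N)\pmod K$ column by column, and remove $f(N)$ by conjugating with an invertible polynomial in $N$ using $[u(N),L]\equiv u'(N)\pmod K$. Your $U=\exp(p(N))$ is just the solution of the paper's recursion $u'+fu\equiv 0\pmod{t^{n-1}}$; the only cosmetic difference is that you absorb $c_0E$ into the ambiguity $FE+K$ of $X$, whereas the paper also removes it by the conjugation.

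Two small corrections. Your indexing ``cancel $c_kN^{k-1}$ for $k\geq 2$'' is slightly off: you need $p'(t)\equiv\sum_{k\geq1}c_kt^k\pmod{t^{n-1}}$. Also, no degree-by-degree induction is needed, because $[[L,p(N)],p(N)]\in K$ makes $U^{-1}LU\equiv L-p'(N)\pmod K$ exact.
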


\begin{proof}
By Lemma~\ref{lem:inner-derivation}, we
have $\delta(b) = bX - Xb + K$ for some fixed~$X\in M_n(F)$.
The condition $\delta(N) = E + K$ says that
$NX - XN = E$ modulo $K$.
Since $NL - LN = E$ modulo $K$, we may write $X = L + Y$, where $NY - YN\in K$.
Hence the first $n-1$ columns of $NY - YN$ vanish.
This determines the first $n-1$ columns of~$Y$ successively and gives
$Y = f(N)$ modulo $K$ for some $f\in F[t]$.
Since $N^{n-1} = e_{1n}\in K$, we may assume $\deg f\leq n-2$.
Therefore $X = L + f(N)$ modulo~$K$.

To prove the statement, it remains to remove the term $f(N)$ by a conjugation which preserves~$N$.
For this purpose, choose an invertible matrix of the form $U=u(N)$.
Since $U$~is a polynomial in~$N$, it commutes with~$N$, so conjugation by~$U$ fixes~$N$.

We now choose $u$ so that the same conjugation removes $f(N)$. Since
$[N^r,L] = rN^{r-1}$ modulo $K$, one has
$[u(N),L] = u'(N)$ modulo $K$. Hence
$$
U(L+f(N))U^{-1}
 = L + (u'(N)+f(N)u(N))U^{-1} \!\!\! \pmod K.
$$

Thus it is enough to choose
$u(t) = 1 + u_1t + \ldots + u_{n-1}t^{n-1}$ satisfying
$u'(t) + f(t)u(t) = 0$ modulo $t^{n-1}F[t]$.
Such a polynomial exists because its coefficients are determined successively and $\charr F = 0$.
Indeed, writing $f(t) = \sum f_it^i$ and $u_0 = 1$, comparison of the coefficient of $t^r$ gives
$(r+1)u_{r+1} = -\sum_{i=0}^r f_i u_{r-i}$ for $0\leq r\leq n-2$.
Note that $U$~is invertible since $U-E$ is nilpotent.
Thus $U(L+f(N))U^{-1} = L$ modulo~$K$.
\end{proof}

\begin{proof}[Proof of Theorem~\ref{thm:main}a]
By~Lemma~\ref{lem:zero-known}, we may assume that $R(1) = N$.
By~Lemma~\ref{lem:zero-image}, we get $\Imm R = B$ and $\ker R = K$. 
Hence $R$ induces a vector-space isomorphism $M_n(F)/K\to B$. 
Let $\delta\colon B\to M_n(F)/K$ be its inverse, it satisfies the Leibniz identity and $\delta(N) = E + K$.
By~Lemma~\ref{lem:zero-derivation}, we may consider $\delta_0$ instead of~$\delta$.

Let $d_i = (i-1)!$, $D = \diag\{d_1,\ldots,d_n\}$ and
$P(x) = DB_0(D^{-1}xD)D^{-1}$.
Direct substitution in \eqref{eq:B0} gives
$P(E) = N$, $\ker P = K$, $\Imm P = B$, and $P(bL - Lb) = b$ for $b\in B$.
Actually, only the last condition requires some comments.
Put $S = D^{-1}LD = e_{21} + \ldots + e_{n,n-1}$.
Let $b = e_{ij}$ with $1\leq i<n$, $1\leq j\leq n$,
then
$D^{-1}bD = (d_j/d_i)e_{ij}$ and
$$
e_{ij}S-Se_{ij}
 = \begin{cases}
 e_{i,j-1}-e_{i+1,j}, & j>1, \\
 -e_{i+1,1}, & j=1. \end{cases}
$$
To prove $P(e_{ij}L - Le_{ij}) = e_{ij}$ it is sufficient to check that $B_0(e_{ij}S - Se_{ij}) = e_{ij}$ holds for all $1\leq i\leq n-1$ and $1\leq j\leq n$.
By~\eqref{eq:B0}, we have $-B_0(e_{i+1,1}) = e_{i1}$ and $B_0(e_{i,j-1}) - B_0(e_{i+1,j}) = e_{ij}$ for $j>1$; in the latter equality the two finite sums differ only by the first term. Therefore
$B_0(e_{ij}S - Se_{ij}) = e_{ij}$.

Let $\psi$ be the inner automorphism from Lemma~\ref{lem:zero-derivation}.
Put $T = \psi^{-1} R\psi$.
The induced maps $\bar T,\bar P\colon M_n(F)/K\to B$ are isomorphisms, and both have the same inverse $\delta_0$.
Hence $\bar T = \bar P$.
Therefore $T = P$, that is, $\psi^{-1} R\psi = P$.
By the definition of~$P$, it is conjugate to~$B_0$. Hence $R$~is conjugate to~$B_0$.
\end{proof}

\section{Nonzero weight}

The assertion for $n\leq2$ follows from the classification~\cite{BGP}
Hence throughout this section we assume $n\geq3$.

By $L_n(F)$ and $U_n(F)$ we denote the subalgebra of strictly lower-triangular and the subalgebra of strictly upper-triangular matrices in $M_n(F)$, respectively.

We first determine how $A_0 = \ker R^n$ and $A_1 = \ker(R+\id)^{n-1}$ look like.

\begin{lemma}\label{lem:triangular}
Let $R$ be a Rota---Baxter operator of weight~1 and $m_R(X) = X^n(X+1)^{n-1}$.
Up to conjugation by an automorphism or antiautomorphism,
$R(1) = \diag\{0,1,\ldots,n-1\}$,
$\ker R^n = L_n(F)\oplus D_n(F)$ and $\ker(R+\id)^{n-1} = U_n(F)$.
\end{lemma}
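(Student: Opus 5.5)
We start from Lemma~\ref{lem:nz-known}. By part b), after conjugation we have $a := R(1) = \diag\{-s,\ldots,0,\ldots,r\}$ with $r+s+1=n$, and every diagonal space $V_d$ is $R$-invariant. By part c), $A_0=\ker R^n$ and $A_1=\ker(R+\id)^{n-1}$ are subalgebras with $M_n(F)=A_0\oplus A_1$. Since each $V_d$ is $R$-invariant, $R|_{V_d}$ has its generalized eigenspaces inside $V_d$. Hence
$$A_0=\bigoplus_d (A_0\cap V_d), \qquad A_1=\bigoplus_d (A_1\cap V_d).$$
Lemma~\ref{lem:hom-known} says that $A_0,A_1$ are $L_a$-invariant. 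The operator $L_a$ acts on $e_{ij}$ by the scalar $a_{ii}$. These scalars are pairwise distinct, so $A_0$ and $A_1$ are spanned by matrix units $e_{ij}$. Therefore each of $A_0,A_1$ is determined by a partition of the set of positions $(i,j)$.

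Next we use dimensions and the diagonal. We have $\dim A_0=n\cdot n$ and $\dim A_1=n(n-1)$: the generalized eigenspaces of the extremal operator each contain $n$ Jordan blocks, a count I take from the proof of Lemma~\ref{lem:nz-known}c in \cite{GubarevSpectrum}. I would check this on $B_1$ and accept it. On $V_0=D_n(F)$ the operator $R$ satisfies $R(e_{ii})\in D_n(F)$. We have $1=\sum e_{ii}$, and $R(1)$ has eigenvalue structure that forces $D_n(F)\subset A_0$. A cleaner argument: $D_n(F)$ is a commutative semisimple subalgebra, and $R|_{D_n}$ is an RB-operator of weight~1 on $F^n$. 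On $F^n$, the RB index is $n$ by Theorem~\ref{theo:rb-index}-type bounds for $F^n$. Combining this with the fact that $A_1\cap D_n$ is a subalgebra of $D_n$ that is nil, since $(R+\id)$ restricted to it is invertible-compatible, we get $A_1\cap D_n=0$. So $D_n\subset A_0$.

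The key step is then combinatorial. $A_0$ and $A_1$ are subalgebras spanned by matrix units and they partition all off-diagonal positions. Let $\Gamma_0,\Gamma_1$ be the sets of off-diagonal pairs. $A_0$ contains all $e_{ii}$, so $\Gamma_0$ is a transitive relation (a preorder). $A_1$ contains no idempotent, so $\Gamma_1$ is transitive and contains no pairs $(i,j),(j,i)$ simultaneously. The complementarity $\Gamma_0\sqcup\Gamma_1=\{i\neq j\}$ then forces the following. If $(i,j)\in\Gamma_0$ and $(j,i)\in\Gamma_1$ then... working out transitivity, $\Gamma_1$ is a strict total order, and $\Gamma_0$ is its opposite. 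Indeed, for each pair $\{i,j\}$, exactly one of $(i,j),(j,i)$ must lie in $\Gamma_1$: it cannot contain both, and if it contained neither, $A_0$ would contain $e_{ij},e_{ji}$ and hence a copy of $M_2$. This copy of $M_2$ is to be excluded by nilpotency of $R|_{A_0}$ together with Lemma~\ref{lem:hom-known}, as follows. $\Phi_0$ is an endomorphism of $A_0$ with $\Phi_0$ nilpotent, while the nonzero images of $e_{ij}e_{ji}=e_{ii}$ behave as in an RB-operator of weight~1 on $M_2$ that is nilpotent, which is impossible. This exclusion is the step I expect to be the main obstacle, and I would try to deduce it instead from the dimension count $\dim A_1=n(n-1)/2$ (half the off-diagonal positions). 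A total order of $\{1,\ldots,n\}$ is realized by a permutation matrix conjugation, and this brings $\Gamma_1$ to $\{i<j\}$, giving $A_1=U_n(F)$ and $A_0=L_n(F)\oplus D_n(F)$.

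Finally, we fix $R(1)$. After the permutation, $a$ is still diagonal. By Lemma~\ref{lem:hom-known}b, $\Phi_1$ commutes with $\ad_a$ on $U_n$ and $[L_a,\Phi_1]=-\Phi_1$. Moreover, $R$ preserves the $V_d$, so it acts on $e_{i,i+1}$, and the $L_a$-eigenvalues shift by $-1$ under $\Phi_1$ within $V_{-1}$. Since $\Phi_1$ is invertible on $U_n$, this forces $a_{i+1,i+1}-a_{ii}=1$ consecutively. Combined with the spectrum $\{-s,\ldots,r\}$, this gives $a=\diag\{c,c+1,\ldots\}$, and the entry $0$ must be the first one, presumably using $R(1)\in A_0$-structure. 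Failing that, we apply the transpose composed with the reversal antiautomorphism, or $\phi$, to normalize $a=\diag\{0,1,\ldots,n-1\}$.
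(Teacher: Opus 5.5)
Your proposal has genuine gaps, and they share one source: you never use the extremality of $m_R$ diagonal by diagonal.

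\textbf{The inclusion $D_n(F)\subseteq A_0$.} Your argument does not work. Nothing forces $A_1\cap D_n(F)$ to be nil: for a general weight-1 operator such as $R=-\id$ it is all of $D_n(F)$, and ``invertible-compatible'' is not an argument.

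\textbf{Excluding $e_{ij},e_{ji}\in A_0$.} You flag this step yourself, and it remains unproved. The proposed substitute is a dimension count that you only cite. As first stated ($\dim A_0=n^2$, $\dim A_1=n(n-1)$) it is impossible, since $\dim A_0+\dim A_1=n^2$. The correct values $n(n\pm1)/2$ are essentially the conclusion of the lemma.

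\textbf{The missing idea.} Since every $V_d$ is $R$-invariant, $m_R$ is the least common multiple of the minimal polynomials of the $R|_{V_d}$, and $\dim V_d=n-|d|$. The factor $X^n$ needs some $V_d$ of dimension at least $n$, so it comes from $V_0$. Hence $R|_{V_0}$ is nilpotent and $V_0\subseteq A_0$. The factor $(X+1)^{n-1}$ can then only come from $V_1$ or $V_{-1}$, which must lie entirely in $A_1$. After a transpose, $V_{-1}\subseteq A_1$, and since $A_1$ is a subalgebra, $U_n(F)\subseteq A_1$. Your matrix-unit and idempotent observations then finish the argument: $e_{ji}\in A_1$ with $j>i$ would give $e_{ii}=e_{ij}e_{ji}\in A_1$. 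No permutation conjugation is needed, because the order is forced to be the natural one.

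\textbf{Normalizing $a=R(1)$.} This step is also unsound, for three reasons. First, conjugation by a general permutation matrix destroys the $R$-invariance of the $V_d$, which you then use. Second, $\Phi_1=(R+\id)R^{-1}$ is nilpotent on $A_1=\ker(R+\id)^{n-1}$, since $\Phi_1^k=(R+\id)^kR^{-k}$; it is not invertible, so your deduction of consecutive diagonal entries has no basis. Third, $\phi$ is not available here: it is not a conjugation, and it turns $m_R$ into $X^{n-1}(X+1)^n$.

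The paper obtains $a=\diag\{0,1,\ldots,n-1\}$ by applying Lemma~\ref{lem:diag-known} to $R|_{V_0}$, which is nilpotent of index $n$, and summing the $R(e_{ii})$. Alternatively, $[L_a,R]=R(R+\id)$ yields $R\bigl(\binom{a}{k}\bigr)=\binom{a}{k+1}$. Then $1\in V_0\subseteq A_0$ gives $\binom{a}{n}=R^n(1)=0$, which forces $s=0$ directly.
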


\begin{proof}
By Lemma~\ref{lem:nz-known}, we may assume that 
$R(1) = a = \diag\{-s,-s+1,\ldots,0,1,\ldots,r-1,r\}$ 
for some natural $r,s$ such that $r+s+1 = n$.
Since the minimal polynomial of the direct sum of the restrictions $R|_{V_d}$ is their least common multiple, the exponent~$n$ of~$X$ is attained on one $V_d$.
Hence $R|_{V_0}$ is nilpotent of index~$n$ and $V_0\subset A_0$.
Applying Lemma~\ref{lem:diag-known}, we may assume that 
$R(e_{ii}) = \sum_{s>i}e_{ss}$ for $1\leq i<n$, and therefore $a = \diag\{0,1,\ldots,n-1\}$.

Analogously, we conclude that $(X+1)^{n-1}$ is attained on $V_{-1}$ or $V_1$.
Up to conjugation with transpose, we may assume
$V_{-1}\subset A_1$.
Since $A_1$ is a subalgebra, it contains the products of $e_{12},e_{23},\ldots,e_{n-1,n}$ and therefore every $e_{ij}$ with $i<j$.

It remains only to exclude lower-triangular matrix units from $A_1$.
By~Lemma~\ref{lem:hom-known}, $A_0,A_1$ are $L_a$-invariant; by~Lemma~\ref{lem:nz-known}, every $V_d$ is $R$-invariant and hence decomposes as
$V_d = (A_0\cap V_d)\oplus(A_1\cap V_d)$.

Since $a = \diag\{0,1,\ldots,n-1\}$, one has $L_a(e_{ij}) = (i-1)e_{ij}$.
Therefore, for a~fixed~$d$, the matrix units $e_{i,i-d}$ have pairwise distinct eigenvalues with respect to $L_a$.
Hence every $L_a$-invariant subspace of $V_d$ is spanned by a subset of these matrix units.
Thus every matrix unit belongs to exactly one of $A_0,A_1$.
If $e_{ji}\in A_1$ for some $j>i$, then since $e_{ij}\in A_1$, the subalgebra $A_1$ contains
$e_{ij}e_{ji} = e_{ii}$, contrary to $V_0\subset A_0$ and $A_0\cap A_1 = (0)$.
So $A_0 = L_n(F)\oplus D_n(F)$ and $A_1 = U_n(F)$.
\end{proof}

\begin{lemma}\label{lem:shifts}
Let $R$ be a Rota---Baxter operator of weight~1 on $M_n(F)$ satisfying the conclusion of Lemma~\ref{lem:triangular}.
Then, up to conjugation by an invertible
diagonal matrix, one has
\begin{equation}\label{eq:Phi0Phi1}
\Phi_1(e_{ij})
 = \begin{cases}
 e_{i-1,j-1}, & 1 < i < j \leq n,\\
 0, & i = 1 < j \leq n,
 \end{cases} \quad
\Phi_0(e_{ij})
 = \begin{cases}
 e_{i+1,j+1}, & 1 \leq j \leq i < n,\\
 0, & 1 \leq j \leq i = n.
 \end{cases}
\end{equation}
\end{lemma}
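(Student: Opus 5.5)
We work under the conclusion of Lemma~\ref{lem:triangular}: $a=R(1)=\diag\{0,1,\ldots,n-1\}$, $A_0=\ker R^n=L_n(F)\oplus D_n(F)$ and $A_1=\ker(R+\id)^{n-1}=U_n(F)$. By Lemma~\ref{lem:hom-known}, $\Phi_0=R(R+\id)^{-1}$ is an algebra endomorphism of $A_0$ and $\Phi_1=(R+\id)R^{-1}$ is an algebra endomorphism of $A_1$. Both commute with $\ad_a$ and satisfy $[L_a,\Phi_0]=\Phi_0$ and $[L_a,\Phi_1]=-\Phi_1$.

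Since $\ad_a(e_{ij})=(i-j)e_{ij}$, commuting with $\ad_a$ means $\Phi_0,\Phi_1$ preserve every $V_d$. Since $L_a(e_{ij})=(i-1)e_{ij}$, the relation $[L_a,\Phi_0]=\Phi_0$ says that $\Phi_0$ maps the $L_a$-eigenspace for $i-1$ into the one for $i$. Combining both, $\Phi_0(e_{ij})=\alpha_{ij}e_{i+1,j+1}$ with $\alpha_{nj}=0$. Similarly $\Phi_1(e_{ij})=\beta_{ij}e_{i-1,j-1}$ with $\beta_{1j}=0$.

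On $D_n(F)$, Lemma~\ref{lem:diag-known} gives $R(e_{ii})=\sum_{s>i}e_{ss}$. Hence $(R+\id)(e_{ii})=\sum_{s\geq i}e_{ss}$, and from $\Phi_0(R+\id)=R$ we read off $\Phi_0(e_{ii})=e_{i+1,i+1}$ for $i<n$ and $\Phi_0(e_{nn})=0$. So $\alpha_{ii}=1$ without any conjugation.

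Next we use multiplicativity. For $\Phi_0$ on $A_0$, the identity $e_{ij}=e_{ii}e_{ij}e_{jj}$ does not constrain the scalars. Instead we use $e_{ij}=e_{i,i-1}e_{i-1,i-2}\cdots e_{j+1,j}$ for $i>j$, which gives $\alpha_{ij}=\prod_{k=j+1}^{i}\alpha_{k,k-1}$. Likewise $\beta_{ij}=\prod_{k=i}^{j-1}\beta_{k,k+1}$ for $i<j$. Thus everything is determined by the $2n-3$ numbers $\alpha_{k+1,k}$ ($1\le k\le n-2$; $\alpha_{n,n-1}=0$ automatically) and $\beta_{k,k+1}$ ($2\le k\le n-1$).

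Two facts must still be shown. \emph{First}, all these scalars are nonzero. This should follow from the nilpotency indices: $(R|_{V_0})^{n-1}\ne0$ is already known, while the index $n-1$ of $R+\id$ on $A_1$ should be attained on $V_{-1}$ by the choice made in Lemma~\ref{lem:triangular}. Note that $(R+\id)|_{A_1}=(\id-\Phi_1)^{-1}-\id+\ldots$; indeed $R=\Phi_1^{-1}$-type formulas give $R+\id=(\id-\Phi_1)^{-1}$ on $A_1$, so the nilpotency index of $R+\id$ on $V_{-1}\cap A_1$ equals that of $\Phi_1$ there. A full chain $e_{12}\mapsto\cdots$ is impossible since $\Phi_1$ strictly decreases $i$ and kills $e_{12}$, so I would rather compute $(R+\id)=(\id-\Phi_1)^{-1}$ and relate its index to the nonvanishing of the $\beta$'s. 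If this fails, I would instead extract nonvanishing from the RB identity~\eqref{RB} itself, evaluated on $x=e_{k+1,k}$, $y=e_{k,k+1}$, whose product lands in $D_n(F)$ where $R$ is known explicitly.

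\emph{Second}, we must relate the $\alpha$'s and $\beta$'s. Conjugation by $D=\diag\{d_1,\ldots,d_n\}$ rescales $\alpha_{k+1,k}$ by $d_{k+1}d_k^{-1}\cdot d_{k+2}^{-1}d_{k+1}$, or similar; I expect the freedom of $n-1$ parameters to normalize only the $\alpha$'s, say. The $\beta$'s must then be forced to equal $1$ by the RB identity with mixed arguments $x\in A_0$, $y\in A_1$. Here one expresses $R$ on $A_0$ as $\Phi_0(\id-\Phi_0)^{-1}$ and on $A_1$ as $(\Phi_1-\id)^{-1}$, and compares, e.g., $R(e_{k+1,k})R(e_{k,k+1})$ with $R(R(e_{k+1,k})e_{k,k+1}+e_{k+1,k}R(e_{k,k+1})+e_{k+1,k}e_{k,k+1})$ on a single diagonal coefficient. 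This mixed computation, where geometric series from both sides interact, is the main obstacle. I would first try it for the pairs $(e_{k+1,k},e_{k,k+1})$ to get a recursion of the form $\beta_{k,k+1}\alpha_{k+1,k}=1$ or $\beta_{k+1,k+2}=\alpha_{k+1,k}$, which together with the diagonal normalization yields all scalars equal to $1$, i.e.\ \eqref{eq:Phi0Phi1}.
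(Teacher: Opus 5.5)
The overall reduction you set up is the paper's, and it is sound. The $\ad_a$/$L_a$ eigenvalue argument gives $\Phi_0(e_{ij})=\alpha_{ij}e_{i+1,j+1}$ and $\Phi_1(e_{ij})=\beta_{ij}e_{i-1,j-1}$. The diagonal values follow from $\Phi_0(R+\id)=R$. Multiplicativity then leaves only the unknowns $\alpha_{p,p-1}$ and $\beta_{p,p+1}$, $2\le p\le n-1$.

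\textbf{The decisive step is missing.} You leave the relation between the two families open, and the test pair you propose gives nothing. For $x=e_{k+1,k}$, $y=e_{k,k+1}$ one has $R(x)\in\Span\{e_{k+1+r,k+r}\mid r\ge1\}$ and $R(y)\in -e_{k,k+1}+\Span\{e_{k-s,k+1-s}\mid s\ge1\}$. Hence $R(x)R(y)=0$, $R(x)y=0$ and $xR(y)=-e_{k+1,k+1}=-xy$, so \eqref{RB} reduces to $0=0$; the reversed order is equally empty. What you need is a choice of arguments that removes the geometric series. The paper takes $x=R^{-1}(c)=(\Phi_1-\id)c$ and $y=(R+\id)^{-1}(d)=(\id-\Phi_0)d$ with $c\in A_1$, $d\in A_0$. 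Then \eqref{RB} becomes $c\Phi_0(d)=R(\Phi_1(c)d-c\Phi_0(d))$. With $c=e_{p,p+1}$, $d=e_{p,p-1}$ and $R(e_{ii})=\sum_{s>i}e_{ss}$, the coefficient of $e_{pp}$ gives $\alpha_{p,p-1}=\beta_{p,p+1}$. The single pair $x=e_{p,p+1}$, $y=e_{p,p-1}$ also works: the $e_{pp}$-coefficients are $-\alpha_{p,p-1}$ on the left and $-\beta_{p,p+1}$ on the right. So your second guessed relation is the right one, but it is not derived. Conjugation by $\diag\{d_1,\ldots,d_n\}$ multiplies $\alpha_{p,p-1}$ and $\beta_{p,p+1}$ by the same factor $d_{p-1}d_{p+1}/d_p^2$. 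So this relation is exactly what must be forced, and either family may be normalized; the paper normalizes the $\beta$'s.

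\textbf{The nonvanishing step also needs repair.} The formula $R+\id=(\id-\Phi_1)^{-1}$ on $A_1$ is false, since its right-hand side is invertible. From $\Phi_1-\id=R^{-1}$ one gets $R+\id=\Phi_1(\Phi_1-\id)^{-1}$, or simply $\Phi_1^k=(R+\id)^kR^{-k}$ on $A_1$. The full chain does exist, just in the other direction: $e_{n-1,n}\mapsto\beta_{n-1,n}e_{n-2,n-1}\mapsto\cdots\mapsto e_{12}\mapsto0$. Since $V_{-1}$ is the only $V_d\subset A_1$ of dimension at least $n-1$, the factor $(X+1)^{n-1}$ of $m_R$ forces $R+\id$, hence $\Phi_1$, to have nilpotency index $n-1$ on $V_{-1}$. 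That is, $\prod_{k=2}^{n-1}\beta_{k,k+1}\neq0$, which is the paper's argument. By contrast, $(R|_{V_0})^{n-1}\neq0$ says nothing about the $\alpha_{p,p-1}$, because on $V_0$ the map $\Phi_0$ involves no unknowns. Their nonvanishing comes only from $\alpha_{p,p-1}=\beta_{p,p+1}$. So if you normalize the $\alpha$'s, you must derive the RB relation before normalizing, not after.
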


\begin{proof}
By Lemma~\ref{lem:hom-known},
$\Phi_1$ is an endomorphism of~$A_1$ satisfying
$\ad_a \Phi_1 = \Phi_1 \ad_a$ and $[L_a,\Phi_1] = -\Phi_1$.
The first equality implies $\Phi_1(e_{ij}) \in V_{i-j}$, the second one gives us
$\Phi_1(e_{ij}) = h_{ij}e_{i-1,j-1}$ for $1<i<j$ and
for some $h_{ij}\in F$ and $\Phi_1(e_{1j}) = 0$.
Since $\Phi_1^k = (R+\id)^kR^{-k}$ on $A_1$, its nilpotency index on $V_{-1}$ is $n-1$; therefore
all adjacent coefficients $h_{i,i+1}$ are nonzero.
Applying conjugation with corresponding diagonal matrix, we may assume that all of them equal to~1.

Indeed, let $D = \diag\{d_1,\ldots,d_n\}$ be invertible
and let $\psi_D\in\Aut(M_n(F))$ be given by
$\psi_D(x) = D^{-1}xD$.
Then the endomorphism corresponding to $R^{(\psi_D)}$ on $A_1$ equals
$\Phi_1^{(\psi_D)} = \psi_D^{-1}\Phi_1\psi_D$, we have used here that $\psi_D(A_1) = A_1$.
Since
$\psi_D(e_{pq}) = (d_q/d_p)e_{pq}$ and
$\psi_D^{-1}(e_{pq}) = (d_p/d_q)e_{pq}$, we obtain
$$
\Phi_1^{(\psi_D)}(e_{i,i+1})
 = h_{i,i+1}\frac{d_{i-1}d_{i+1}}{d_i^2}e_{i-1,i}, \quad 2 \leq i \leq n - 1.
$$
Choose arbitrary $d_1,d_2\neq 0$ and define successively
$d_{i+1} = \frac{d_i^2}{h_{i,i+1}d_{i-1}}$
for $2\leq i \leq n-1$. Since all $h_{i,i+1}$ are nonzero, all $d_i$ are nonzero, so $D$ is invertible.

Further, we apply that $\Phi_1$ is an endomorphism, hence~\eqref{eq:Phi0Phi1} hold for~$\Phi_1$.

By~Lemma~\ref{lem:triangular}, we compute
$\Phi_0(e_{ii}) = e_{i+1,i+1}$ for $i<n$ and
$\Phi_0(e_{n,n}) = 0$.
Since $\ad_a \Phi_0 = \Phi_0 \ad_a$ and $[L_a,\Phi_0] = \Phi_0$, we analogously to the case of~$\Phi_1$ deduce
$\Phi_0(e_{p,p-1}) = b_{p-1}e_{p+1,p}$ for $2\leq p\leq n-1$ and for some $b_{p-1}\in F$ and $\Phi_0(e_{n,n-1}) = 0$.

Let us clarify the values of~$b_q$ with the help of~\eqref{RB}.
For $c\in A_1$ and $d\in A_0$, put $x = R^{-1}(c) = (\Phi_1 - \id)c$ and $y = (R + \id)^{-1}(d) = (\id - \Phi_0)d$.
Then $R(x) = c$, $R(y) = \Phi_0(d)$, and $y + R(y) = d$. Substitution in~\eqref{RB} gives
$$
c\Phi_0(d) = R(\Phi_1(c)d - c\Phi_0(d)).
$$
Taking $c = e_{p,p+1}$ and $d = e_{p,p-1}$, where $2 \leq p \leq n - 1$, we get
$b_{p-1}e_{pp} = R(e_{p-1,p-1} - b_{p-1}e_{pp})$.
Using
$R(e_{ii}) = \sum_{s>i}e_{ss}$ and comparing the coefficient of $e_{pp}$ on both sides, we obtain $b_{p-1} = 1$. Hence
$\Phi_0(e_{p,p-1}) = e_{p+1,p}$ for $2 \leq p \leq n - 1$. We again apply that $\Phi_0$ is an endomorphism on~$A_0$ and prove~\eqref{eq:Phi0Phi1}.
\end{proof}

\begin{proof}[Proof of Theorem~\ref{thm:main}b]
By~Lemma~\ref{lem:nz-known}, we may assume that
$m_R(X) = X^n(X+1)^{n-1}$.
Apply Lemmas~\ref{lem:triangular} and~\ref{lem:shifts}.

On $A_1$, $\Phi_1 - \id = R^{-1}$, hence
$R = -(\id-\Phi_1)^{-1}$. Since $\Phi_1$ is nilpotent, $R(e_{ij}) = -\sum_{r\geq0}e_{i-r,j-r}$ for $i<j$. On $A_0$,
$\Phi_0 = R(R+\id)^{-1}$ gives $R = \Phi_0(\id-\Phi_0)^{-1}$, and therefore
$R(e_{ij}) = \sum_{r\geq1}e_{i+r,j+r}$ for $i\geq j$. These equalities coincide with~\eqref{eq:B1}, so $R = B_1$.
\end{proof}

\section*{Acknowledgements}

The research was carried out within the framework of the Sobolev
Institute of Mathematics state contract (project FWNF-2026-0017).

\noindent Vsevolod Gubarev \\
Sobolev Institute of Mathematics \\
Acad. Koptyug ave. 4, 630090 Novosibirsk, Russia \\
Novosibirsk State University \\
Pirogova str. 1, 630090 Novosibirsk, Russia \\
e-mail: wsewolod89@gmail.com
\end{document}